\documentclass[11pt,reqno]{amsart}
\usepackage{amsmath,amsxtra,latexsym,amsthm,amssymb,amscd,pb-diagram}
\usepackage{mathrsfs,mathabx,amsfonts}
\usepackage[colorlinks=true,linkcolor=red,citecolor=blue]{hyperref}
\usepackage[margin=1in]{geometry}

\usepackage{bm}
\usepackage{color}
\usepackage{makecell,multirow,diagbox}
\usepackage{mathtools}
\usepackage[displaymath, mathlines]{lineno}

\usepackage{graphicx}
\usepackage{epsfig}
\usepackage{array}
\usepackage{enumitem}

\newtheorem{definition}{Definition}[section]
\newtheorem{theorem}{Theorem}[section]
\newtheorem{proposition}{Proposition}[section]
\newtheorem{lemma}{Lemma}[section]

\newtheorem{remark}{Remark}[section]
\newtheorem{example}{Example}[section]

\begin{document}
\title[Semilinear damped wave equation with Riesz potential-type power nonlinearity]{The application of decay character on the global behavior of damped wave equation with Riesz potential-type power nonlinearity}

\subjclass{35A01, 35B33, 35B44, 35L15, 35L71}
\keywords{Semilinear damped wave equation, Riesz potential-type power nonlinearity, Decay character, Pseudo-measure spaces, Global existence, Blow-up, Lifespan estimates.}
\thanks{$^* $\textit{Corresponding author:} Phan Duc An (anpd@hvnh.edu.vn)}

\maketitle
\centerline{\scshape Trung Loc Tang$^{1}$, Dinh Van Duong$^{2}$, Duc An Phan$^{3,*}$}
\medskip
{\footnotesize
\centerline{$^1$ Department of Mathematics, Thang Long University}
	\centerline{Nghiem Xuan Yem, Hoang Mai, Hanoi, Vietnam}

	\centerline{$^2$ Faculty of Mathematics and Informatics, Hanoi University of Science and Technology}
	\centerline{No.1 Dai Co Viet road, Hanoi, Vietnam}

    \centerline{$^3$ Department of Mathematics, Banking Academy of Vietnam}
	\centerline{No.12 Chua Boc, Kim Lien, Hanoi, Vietnam}
    }

\begin{abstract}
    In this paper, our first objective is to investigate the decay rates and the global-in-time existence of solutions to the semilinear damped wave equation with the Riesz potential-type power nonlinearity $\mathcal{I}_\gamma\left(|u|^p\right)$, where $\gamma\in[0,n)$, in terms of the decay character of the initial data. This approach enables us to establish global existence results for several classes of initial data. Our second objective is to show, via a blow-up argument, that the conditions imposed on the nonlinearity in the global existence theorem are sharp for initial data belonging to the pseudo-measure space $\mathcal{Y}^q$. As a consequence, we derive the new critical exponent
$$
p_{\mathrm{crit}}(n,q,\gamma):=1+\frac{2+\gamma}{n-q}
$$
for $1\leq n\leq 4$ and $0\leq \gamma<q<n/2$. Furthermore, we establish a sharp lifespan estimate for solutions that blow up in finite time.
\end{abstract}


\section{Introduction}
In this paper, we investigate the following Cauchy problem for semilinear damped wave equation with a Riesz potential term coupled with a power-law nonlinearity:
\begin{align}\label{Semilinear_Damped_Waves}
\begin{cases}
u_{tt}(t,x)-\Delta u(t,x)+u_t(t,x)=\mathcal{I}_\gamma(|u|^p)(t,x), &x\in \mathbb{R}^n, t>0,\\
u(0,x)=\varepsilon u_0(x),\ u_t(0,x)=\varepsilon u_1(x),&x\in \mathbb{R}^n,
\end{cases}
\end{align}
where $\gamma \in [0, n), \,p>1$, and $\varepsilon>0$ describes the size of initial data. The Riesz potential $\mathcal{I}_\gamma$ is defined by
$$
\mathcal{I}_\gamma (f)(t,x):= \frac{\Gamma\left(\frac{n-\gamma}{2}\right)}{2^\gamma \pi^\frac{n}{2} \Gamma\left(\frac{\gamma}{2}\right)}\left(|x|^{-(n-\gamma)} \ast_x f(t,x)\right)= \frac{\Gamma\left(\frac{n-\gamma}{2}\right)}{2^\gamma \pi^\frac{n}{2} \Gamma\left(\frac{\gamma}{2}\right)} \int_{\mathbb{R}^n} \displaystyle\frac{f(t,y)}{|x-y|^{n-\gamma}} d y,
$$
for any $f \in L_{\rm loc}^1\left(\mathbb{R}^n\right)$.
More generally, the Riesz potential $\mathcal{I}_\gamma$ can be interpreted as the inverse operator of the fractional Laplacian in the sense that $$\mathcal{I}_\gamma (f)(t,x) = (-\Delta)^{-\frac{\gamma}{2}} f(t,x)=\mathfrak{F}^{-1}(|\xi|^{-\gamma} \widehat{f}(t,\xi))(t,x).$$ Here, we denote $\widehat{f}(t,\xi):= \mathfrak{F}_{x\rightarrow \xi}(f(t,x))$ as the Fourier transform with respect to the spatial variable of a function $f(t,x)$ and $\mathfrak{F}^{-1}$ represents the inverse Fourier transform. For a more comprehensive account of these fundamental properties of the Riesz potential, we refer the reader to \cite{Landkof1972, Stein1970}.

Regarding the corresponding linear problem of \eqref{Semilinear_Damped_Waves} as follows:
\begin{equation}\label{Linear_Damped_Waves}
\begin{cases}
v_{tt}(t,x)-\Delta v(t,x)+v_t(t,x)=0, &x\in \mathbb{R}^n, \quad t>0,\\
v(0,x)=v_0(x),\quad v_t(0,x)=v_1(x),&x\in \mathbb{R}^n.
\end{cases}
\end{equation}
The pioneer result has been established by Matsumura in \cite{Matsumura1976}. Their main tools are Fourier splitting method to prove some basic decay estimates of solution to \eqref{Linear_Damped_Waves} and its derivatives. Moreover, the solution to \eqref{Linear_Damped_Waves} witnesses the diffusion phenomenon, that is, it shares the same asymptotic behavior of solution to the heat equation (see \cite{Radu2011,DabbiccoEbert2014}). Specifically, they obtain the approximation of the solution $u$ by the Gauss kernel when $t$ is large.

When the right hand side of \eqref{Semilinear_Damped_Waves} is $|u|^{p}$ (the special case $\gamma=0$ of Riesz potential nonlinearity), under the additional regularity $L^1$ for the initial data, the authors in \cite{Todorova2001, IkehataTanizawa2005,Ikehata2002} show that the critical exponent of this problem coincides with the Fujita exponent $p_{\mathrm{Fuj}}(n):=1+2/n$, by proving the global (in time) existence of small data solutions when $p > p_{F}(n)$ and the blow-up result for weak solutions in finite time even for small data if $1 < p < p_{F}(n)$. In addition, the paper \cite{Zhang-2001} showed that the value $p= p_F(n)$ belongs to the blow-up range.  Here, the critical exponent is understood as the threshold
between the global (in time) existence of small data solutions and the blow-up of solutions
even for small data. For the blow-up range $p \leq p_F(n)$, according to the works \cite{LiZhou1995, Ikeda2016, Lai2019}, the sharp lifespan estimates for blow-up solutions to (\ref{Semilinear_Damped_Waves}) for $\gamma = 0$ in all spatial dimensions have been investigated. Here, we denote by $T_{\varepsilon}$ the lifespan of solution in the following sense:
$$
\begin{aligned}
T_{\varepsilon}:=\sup\{& T > 0:\text{ there exists a unique local (in time) solution $u$ on $[0,T)$}\\
&\text{ with a fixed parameter $\varepsilon>0$}\}.
\end{aligned}
$$ 
Consequently, these papers provided the following sharp lifespan estimates:
$$
T_{\varepsilon} \sim \varepsilon^{-\frac{2(p-1)}{2-n(p-1)}} \text{ if } 1<p<p_F(n) \,\,\text{ and }\,\, \log(T_{\varepsilon}) \sim  \varepsilon^{-\frac{2}{n}} \text{ if } p = p_F(n).
$$

On the other hand, to study the uniform decay rates of solutions to dissipative evolution equations with initial data in $L^2(\mathbb{R}^n)$ spaces, Bjorland and Schonbek in \cite{Bjorland2009} introduce a new notion $r^*(u)$ associated with $u\in L^2(\mathbb{R}^n)$, called the decay character of $u$. This notion is to measure the singularity of $\widehat{u}$ near $0$. This idea has been applied to various dissipative equations, since the solutions of these equations exhibit the same asymptotic behavior as the solutions to the corresponding heat equations. See, for instance, \cite{Niche2015} for results on the compressible approximation of the Navier–Stokes equations, \cite{Armando2022} for the damped wave equation, \cite{AnhTrang2018} for the Camassa–Holm equations, and \cite{AnhDuongToi2026} for the generalized magnetohydrodynamic (MHD) equations.  

This idea has also been applied to structurally damped $\sigma$-evolution equations (see \cite{ADL2024,ADL2025}). Moreover, in these papers, the authors showed that the conditions imposed on the nonlinearity exponent in the global existence theorem are sharp in the $L^m$ framework, under suitable assumptions on the spatial dimension and the initial data, by means of a blow-up argument. These conditions were further shown to be sharp in the space $\dot{H}^{-\sigma}$ with $\sigma\in[0,n/2)$ (see \cite{Chen2023}). Therefore, it is natural to expect that the decay character theory also works well in the $L^m$ and $\dot{H}^{-\sigma}$ settings and leads to the corresponding critical exponents. However, for more general function spaces, establishing the sharpness of these conditions is considerably more difficult due to the lack of results relating the decay characters to the regularity of the initial data.

The main objectives of this paper are as follows. First, we prove the global (in time) existence of mild solutions to the equation \eqref{Semilinear_Damped_Waves} by combining the linear estimates established in \cite{Armando2022,ADL2024} with the decay character theory. Second, we investigate the blow-up phenomenon for the problem \eqref{Semilinear_Damped_Waves} with $(u_0,u_1)\in \mathcal{Y}^q\times \mathcal{Y}^q$ equipping $q \in (0, n/2)$, which enables us to determine the corresponding critical exponent. Here, the pseudo-measure spaces $\mathcal{Y}^q$ are defined by (see, for instance, \cite{Bhattacharya2003})
$$ 
\mathcal{Y}^q\left(\mathbb{R}^n\right):=\bigg\{f \in S^{\prime}\left(\mathbb{R}^n\right): \widehat{f} \in L_{\text{loc}}^1\left(\mathbb{R}^n\right) 
\text { and } \|f\|_{\mathcal{Y}^q} :=  \sup _{\xi \in \mathbb{R}^n}\left\{|\xi|^q|\widehat{f}(\xi)|\right\}<\infty\bigg\}, 
$$
where $S^{\prime}\left(\mathbb{R}^n\right)$ is the space of tempered distributions. These spaces were originally introduced in Harmonic Analysis and later proved to be particularly useful in the study of dissipative equations. The quantity
$|\xi|^q|\widehat{f}(\xi)|$ measures the behavior of $\widehat{f}$ near the low-frequency region $\xi = 0$. Hence, the parameter $q$ quantifies the strength of the singularity (or decay) of the Fourier transform at low frequencies. When $q$ increases, stronger control near $\xi=0$ is imposed. One can see that $\mathcal{Y}^q$ is slightly more general than the Riesz potential space $\dot{H}_1^{q}$ since $\||\cdot|^{q}\widehat{f}\|_{L^{\infty}}\lesssim \|\mathcal{I}_{-q}f\|_{L^1}$. These space are different from the $L^m$ and $\dot{H}^{-\sigma}$ spaces. To be specific, under the additional assumption that the initial data belong to the space $\mathcal{Y}^q$, the new critical exponent proposed for \eqref{Semilinear_Damped_Waves} is
\begin{equation}\label{Critical_Exp}
p_{\mathrm{crit}}(n,q,\gamma):=1+\frac{2+\gamma}{n-q}, \qquad 1\le n\le4,\quad 0\le\gamma<q<\frac{n}{2}.
\end{equation}
This result demonstrates the effectiveness of the decay character approach on determining the critical exponents for the semilinear problems. Finally, we establish sharp estimates for lifespan of local solutions to \eqref{Semilinear_Damped_Waves}.
More specifially, assuming that the initial data belong to the pseudo-measure spaces $\mathcal{Y}^q$ with $q$ satisfying some conditions, we establish the sharpness of the new lifespan estimates for the solution, namely
$$
T_{\varepsilon} \sim
\begin{cases}
\varepsilon^{-\frac{2(p-1)}{2+\gamma-(p-1)(n-q)}} & \text{ if } 1<p<p_{\mathrm{crit}}(n, q, \gamma),\\
\infty &\text{ if } p \geq p_{\mathrm{crit}}(n, q, \gamma).
\end{cases}
$$

To the best of the authors' knowledge, apart from initial data belonging to $L^1$, only initial data in $\mathcal{Y}^q$ allow one to derive sharp lifespan estimates for local solutions in the subcritical case (see Remark \ref{remark_sharp} for details).

\vspace{0.3cm}
\noindent \textbf{Notations:}   We write $f\lesssim g$ when there exists a constant $C>0$ such that $f\le Cg$, and $f \sim g$ when $g\lesssim f\lesssim g$. As usual, $H^{a}_m$ and $\dot{H}^{a}_m$, with $m \in (1, \infty), a \in \mathbb{R}$, denote potential spaces based on $L^m$ spaces. The space $\mathcal{Y}^q(\mathbb{R}^n)$ will be denoted simply by $\mathcal{Y}^q$. Finally, we denote $[\mu]^+
 := \max\{\mu, 0\}$ as the positive part of $\mu \in \mathbb{R}$, and 	$\lceil\mu \rceil
 := \min\{k \in \mathbb{Z} : k \geq \mu\}$.

\vspace{0.3cm}
\noindent \textbf{This article is organized as follows:} In Section 2, we establish the global existence theorem by means of the decay character theory and derive the corresponding decay estimates for solutions in the energy space. In Section 3, we prove that the conditions imposed on $p$ and $\gamma$ are sharp by means of a blow-up argument. In addition, we also establish sharp lifespan estimates for local solutions.

\section{Global existence of small data solution}\label{Section_GESDS}

\subsection{Linear estimates via decay characters}\label{Section_Linear}
We begin with a brief introduction to the decay character theory. First, we recall the notion of the decay indicator, which describes the behavior of the Fourier transform $(\widehat{|D|^s h})(\xi)$ near the origin in comparison with the power function $|\xi|^r$ (see \cite{Bjorland2009}). 

\begin{definition}[Decay indicator] \label{definition_2.1} For $h \in L^2\left(\mathbb{R}^n\right), s>0$ and $r \in(-n / 2+s, \infty)$, the decay indicator $P_r^s(h)$ corresponding to $|D|^s h$ is defined by
$$
P_r^s(h)=\lim _{\rho \rightarrow 0^+} \rho^{-2 r-n} \int_{B(\rho)}|\xi|^{2 s}\big|\widehat{h}(\xi)\big|^2 d \xi,
$$
where $B(\rho)$ is the ball centered at origin with radius $\rho$.
\end{definition} 
\begin{definition}[Decay character] \label{definition_2.2} The decay character of $|D|^s h$, denoted by $r_s^*\left(h\right)$ is the unique $r \in(-n / 2+s, \infty)$ such that $0<P_r^s\left(h\right)<\infty$, if this number $P_r^s\left(h\right)$ exists. If such $P_r^s\left(h\right)$ does not exist, we set $r_s^*(h)=-n / 2+s$ when $P_r^s\left(h\right)=+\infty$ for all $r \in(-n / 2+s, \infty)$, or $r_s^*(h)=+\infty$ when $P_r^s(h)=0$ for all $r \in(-n / 2+s, \infty)$.
For $s=0$, we denote $P_r^0\left(h\right)=P_r\left(h\right)$ and $r_0^*(h)=r^*(h)$.
\end{definition}
\noindent We denote by $\|\cdot\|_{P,s}$ the following norm of $h$ in $H^s$ for $s\geq 0$ (see \cite{ADL2024}):
\[
\|h\|_{P,s}:=\|h\|_{\dot{H}^s}+\|h\|_{L^2}+\big(P_{r^*(h)}(h)\big)^{1/2}.
\]
For convenience in calculations, we also denote the quantity
\begin{align*}
    \mathcal{R}(u_0, u_1) := \min\{r^*(u_0), r^*(u_1)\}.
\end{align*}
Now let us recall some estimates of solution to the linear problem \eqref{Linear_Damped_Waves}. 
\begin{proposition}[Theorem 1.1, \cite{ADL2024}] \label{Decay_Linear_Positive order}
Let $n \geq 1$ and $(v_0,v_1)\in H^1\times L^2$. If $-n/2<\mathcal{R}(v_0,v_1)<\infty$, then the solution $v$ to \eqref{Linear_Damped_Waves} satisfying the following decay estimates for $0\leq \alpha\leq 1$:
\begin{align*}
\|v(t,\cdot)\|_{\dot{H}^\alpha} \lesssim &(1+t)^{-\frac{n+2\alpha}{4}-\frac{\mathcal{R}(v_0, v_1)}{2}} (\|v_0\|_{P,\alpha} +\|v_1\|_{P,0}),\\
\|v_t(t,\cdot)\|_{L^2} \lesssim &(1+t)^{-\frac{n}{4}-\frac{\mathcal{R}(v_0, v_1)}{2}-1}(\|v_0\|_{P,1} +\|v_1\|_{P,0}).
\end{align*}
\end{proposition}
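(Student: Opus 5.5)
The estimates follow from the explicit Fourier representation of the solution together with a low/high frequency splitting. The low-frequency part is controlled by the decay character of the data, and the high-frequency part decays exponentially.

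\textbf{Representation.} Taking the Fourier transform in $x$ in \eqref{Linear_Damped_Waves} gives
$$\widehat v(t,\xi)=\widehat K_0(t,\xi)\widehat v_0(\xi)+\widehat K_1(t,\xi)\widehat v_1(\xi),$$
where the kernels come from the characteristic roots $\lambda_\pm=-\tfrac12\pm\sqrt{\tfrac14-|\xi|^2}$. We fix a cutoff $\chi$ supported in $|\xi|\le 1/4$ and split each norm into the regions $|\xi|\le 1/4$ and $|\xi|\ge 1/4$.

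\emph{High frequencies.} Here $\mathrm{Re}\,\lambda_\pm\le -c$, so $|\widehat K_0|+|\widehat K_1|\,|\xi|\lesssim e^{-ct}$. Hence this part of $\|v\|_{\dot H^\alpha}$ is bounded by $e^{-ct}(\|v_0\|_{\dot H^\alpha}+\|v_1\|_{L^2})$. This is faster than any polynomial rate.

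\emph{Low frequencies.} Here $\lambda_+=-|\xi|^2+O(|\xi|^4)$ and $\lambda_-\approx -1$. This gives
$$|\widehat K_0|+|\widehat K_1|\lesssim e^{-c|\xi|^2t}+e^{-ct},\qquad |\partial_t\widehat K_0|+|\partial_t\widehat K_1|\lesssim |\xi|^2e^{-c|\xi|^2t}+e^{-ct}.$$
The last bound uses $\partial_t\widehat K_1=(\lambda_+e^{\lambda_+t}-\lambda_-e^{\lambda_-t})/(\lambda_+-\lambda_-)$, whose first term carries the factor $\lambda_+\sim|\xi|^2$.

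The problem therefore reduces to the heat-type estimate
$$\int_{|\xi|\le 1/4}|\xi|^{2\alpha+2k}e^{-2c|\xi|^2t}|\widehat h(\xi)|^2\,d\xi\lesssim (1+t)^{-\frac n2-\alpha-k-r^*(h)}\big(\|h\|_{L^2}^2+P_{r^*(h)}(h)\big),\qquad k\in\{0,2\}.$$

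\textbf{Key step: the heat-type estimate.} This is the main obstacle; it is the Bjorland--Schonbek argument. From $0<P_{r^*}(h)<\infty$ there is $\rho_0$ such that
$$\int_{B(\rho)}|\widehat h|^2\le 2P_{r^*}(h)\,\rho^{2r^*+n}\quad\text{for } \rho\le\rho_0 .$$
I decompose the ball into dyadic shells $\rho_j=2^{-j}\rho$ with $\rho=t^{-1/2}$. On the shell of radius $\sim 2^j t^{-1/2}$ the weight $|\xi|^{2\alpha+2k}e^{-2c|\xi|^2t}$ is at most $t^{-\alpha-k}\,2^{j(2\alpha+2k)}e^{-c4^j}$. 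Summing over $j$ then gives
$$t^{-\alpha-k}\,t^{-r^*-\frac n2}\,P_{r^*}(h)\sum_j 2^{j(2\alpha+2k+2r^*+n)}e^{-c4^j}.$$
The series converges because $2r^*+n>0$, which is exactly the hypothesis $-n/2<\mathcal R$. Shells where $2^jt^{-1/2}\ge\rho_0$ contribute $e^{-ct}\|h\|_{L^2}^2$. For $t\le1$ I simply use $\|h\|_{L^2}$ and the boundedness of the weight.

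One could instead avoid dyadics and use the equivalent characterization from \cite{Bjorland2009}, $\|e^{t\Delta}h\|_{L^2}\sim(1+t)^{-n/4-r^*/2}$, together with $e^{-c|\xi|^2t}=e^{-c|\xi|^2t/2}e^{-c|\xi|^2t/2}$ and $|\xi|^{2\alpha}e^{-c|\xi|^2t/2}\lesssim t^{-\alpha}$. Either route gives the estimate; the dyadic one keeps the $P$-constant explicit.

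\textbf{Assembling the estimates.}
\begin{itemize}
\item For $\|v\|_{\dot H^\alpha}$, apply the low-frequency estimate with $k=0$ to $h=v_0$ and $h=v_1$. Since $(1+t)^{-r^*(h)/2}\le(1+t)^{-\mathcal R/2}$, the combined rate is governed by $\mathcal R(v_0,v_1)$. Adding the high-frequency part bounded by $\|v_0\|_{\dot H^\alpha}$ yields the norm $\|v_0\|_{P,\alpha}+\|v_1\|_{P,0}$.
\item For $\|v_t\|_{L^2}$, use $k=2$ with $\alpha=0$, which gains the extra $(1+t)^{-1}$. The high-frequency part requires $|\xi|\widehat v_0\in L^2$, hence the norm $\|v_0\|_{P,1}$.
\item The terms with $e^{-ct}$ at low frequency are dominated by $e^{-ct}\|h\|_{L^2}$.
\end{itemize}
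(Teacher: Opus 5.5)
Your argument is correct and is essentially the standard proof of the quoted result; the paper itself gives no proof and simply cites Theorem 1.1 of ADL2024. That proof uses explicit Fourier multipliers, exponential decay for $|\xi|\ge 1/4$, low-frequency kernel bounds by $e^{-c|\xi|^2t}$ and $|\xi|^2e^{-c|\xi|^2t}$, and the Bjorland--Schonbek decay-character estimate for the resulting heat-type integral.

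Two small points to tighten:

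\begin{enumerate}
\item Only $\mathcal{R}(v_0,v_1)$ is assumed finite, so one datum may have $r^*(h)=+\infty$ (e.g.\ $v_1=0$). For that datum, run your shell argument with $r=\mathcal{R}(v_0,v_1)$, using $P_{\mathcal{R}(v_0,v_1)}(h)=0$, rather than with $P_{r^*(h)}(h)$.
\item The shells beyond $\rho_0$ actually contribute $e^{-c\rho_0^2t}\|h\|_{L^2}^2$, not $e^{-ct}\|h\|_{L^2}^2$. Your implicit constant therefore depends on $\rho_0$, i.e.\ on $h$ itself and not only on $\|h\|_{P,0}$. This dependence cannot be removed, and it is harmless for how the proposition is used, since there the data are fixed and only $\varepsilon$ varies.
\end{enumerate}
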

\subsection{Global existence result}

Based on the estimates established in Proposition \ref{Decay_Linear_Positive order}, we are able to prove the following theorem.
\begin{theorem}[\textbf{Global existence}] \label{Thm_Global existence}
Let $1 \leq n \leq 4$. The initial data $\left(u_0, u_1\right) \in \mathcal{D} := H^1\times L^2$ and the parameter $\gamma$ satisfy
\begin{equation}\label{Condition_R(u_0,u_1)}
0 \leq \gamma < -\mathcal{R}(u_0, u_1) < \frac{n}{2}.
\end{equation}
 We assume that $p$ satisfies the following conditions:
\begin{equation} \label{Condition_p>p crit}
p \geq 1+ \frac{2+\gamma}{n + \mathcal{R}(u_0, u_1)}
\end{equation}
and 
\begin{equation} \label{Condition_p for G-N}
\frac{2(n+\mathcal{R}(u_0, u_1)+\gamma)}{n} < p \leq \frac{n+2\gamma}{n-2}\;\text{ if }n=3,4. 
\end{equation}
Then there exists a constant $\varepsilon_0>0$ such that for all $\varepsilon \in (0,\varepsilon_0]$, the problem \eqref{Semilinear_Damped_Waves} admits a unique global (in time) solution $u \in \mathcal{C}\left([0, \infty), H^1\right)$. Furthermore, the following estimates hold:
\begin{align}
\|u(t,\cdot)\|_{L^2}&\lesssim \varepsilon(1+t)^{-\frac{n}{4}-\frac{\mathcal{R}(u_0, u_1)}{2}}\left\|\left(u_0, u_1\right)\right\|_{\mathcal{D}},\label{decay_estimate1}\\
\|u(t,\cdot)\|_{\dot{H}^1}&\lesssim \varepsilon(1+t)^{-\frac{n}{4}-\frac{\mathcal{R}(u_0, u_1)}{2}-\frac{1}{2}}\left\|\left(u_0, u_1\right)\right\|_{\mathcal{D}}\label{decay_estimate2}
\end{align}
where
$
    \left\|\left(u_0, u_1\right)\right\|_{\mathcal{D}}:=\left\|u_0\right\|_{P, 1}+\left\|u_1\right\|_{P, 0}.
$
\end{theorem}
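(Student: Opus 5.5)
The proof will be a standard contraction argument in a weighted-in-time energy space. The decay rates are dictated by Proposition \ref{Decay_Linear_Positive order}. Write $\mathcal{R}=\mathcal{R}(u_0,u_1)\in(-n/2,-\gamma)$ and let $E_0(t,x)$, $E_1(t,x)$ be the fundamental solutions of \eqref{Linear_Damped_Waves}. A mild solution is a fixed point of
$$
\Phi[u](t)=\varepsilon E_0(t)\ast u_0+\varepsilon E_1(t)\ast u_1+\int_0^t E_1(t-\tau)\ast \mathcal{I}_\gamma(|u|^p)(\tau)\,d\tau=:\varepsilon u^{\rm lin}+u^{\rm nl}.
$$
We work in $X(T)=\mathcal{C}([0,T],H^1)$ with the norm
$$
\|u\|_{X(T)}=\sup_{0\le t\le T}\Big((1+t)^{\frac n4+\frac{\mathcal{R}}2}\|u(t)\|_{L^2}+(1+t)^{\frac n4+\frac{\mathcal{R}}2+\frac12}\|u(t)\|_{\dot H^1}\Big).
$$
The goal is to prove two bounds with constants independent of $T$:
$$
\|\Phi[u]\|_{X(T)}\lesssim \varepsilon\|(u_0,u_1)\|_{\mathcal{D}}+\|u\|_{X(T)}^p,
$$
$$
\|\Phi[u]-\Phi[v]\|_{X(T)}\lesssim \|u-v\|_{X(T)}\big(\|u\|_{X(T)}^{p-1}+\|v\|_{X(T)}^{p-1}\big).
$$
Banach's fixed point theorem on a small ball then gives the global solution and the estimates \eqref{decay_estimate1}--\eqref{decay_estimate2}. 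The bound $\|\varepsilon u^{\rm lin}\|_{X(T)}\lesssim\varepsilon\|(u_0,u_1)\|_{\mathcal{D}}$ follows directly from Proposition \ref{Decay_Linear_Positive order} with $\alpha=0,1$.

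For the Duhamel term we cannot use decay characters of the source $f(\tau)=\mathcal{I}_\gamma(|u|^p)(\tau)$. Instead we use the classical $(L^m\cap L^2)$–$L^2$ estimates for $E_1$: for $1\le m\le 2$ and $\alpha\in\{0,1\}$,
$$
\|\,|D|^\alpha E_1(t)\ast f\|_{L^2}\lesssim(1+t)^{-\frac n2(\frac1m-\frac12)-\frac\alpha2}\|f\|_{L^m\cap L^2}.
$$
Equivalently, these can be seen as the decay-character estimate with $r^*=-n(1/m-1/2)$. The Riesz potential is handled by Hardy–Littlewood–Sobolev:
$$
\|\mathcal{I}_\gamma(|u|^p)\|_{L^m}\lesssim\|u\|_{L^{mp/(1+m\gamma/n)}}^p\quad\text{whenever } \tfrac1m-\tfrac\gamma n\in(0,1).
$$
The $L^{k}$ norms of $u$ with $k=mp/(1+m\gamma/n)$ are then controlled by Gagliardo–Nirenberg:
$$
\|u\|_{L^k}\lesssim\|u\|_{L^2}^{1-\theta}\|u\|_{\dot H^1}^{\theta},\qquad \theta=n\Big(\tfrac12-\tfrac1k\Big)\in[0,1],
$$
which yields decay $(1+\tau)^{-(\frac n4+\frac{\mathcal R}2)-\frac\theta2}$ per factor. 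The requirement $\theta\in[0,1]$, i.e.\ $k\ge 2$ and, for $n=3,4$, $k\le 2n/(n-2)$, is what produces the two-sided condition \eqref{Condition_p for G-N}. Its upper bound $p\le (n+2\gamma)/(n-2)$ corresponds to taking $m=2$, and its lower bound to the admissible choice $m$ near $1$ with $\gamma$ taken into account. The restriction $n\le4$ keeps $H^1\hookrightarrow L^{2p}$ available.

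On $[0,t/2]$ we choose $m$ close to the exponent matching $\mathcal{R}$, namely
$$
\frac n2\Big(\frac1m-\frac12\Big)=\frac n4+\frac{\mathcal{R}}{2}+\text{(HLS shift)}.
$$
More simply, we take $m$ with $\frac1m=\frac12-\frac{\mathcal{R}}{n}-\frac\gamma n\in(\frac12,1)$; the bound $\gamma<-\mathcal{R}<n/2$ from \eqref{Condition_R(u_0,u_1)} is exactly what makes this admissible. The kernel then decays like $(1+t)^{-\frac n4-\frac{\mathcal R}2-\frac\alpha2}$, and it remains to bound $\int_0^{t/2}\|f(\tau)\|_{L^m\cap L^2}\,d\tau$ uniformly. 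The decay of $\|f(\tau)\|_{L^m}$ is $(1+\tau)^{-p(\frac n4+\frac{\mathcal R}2)-\frac{p\theta}{2}}$. A computation shows this is integrable exactly when $p>1+\frac{2+\gamma}{n+\mathcal{R}}$, which is condition \eqref{Condition_p>p crit}. On $[t/2,t]$ we use $m=2$ (no kernel decay gain) together with the fast decay of $\|f(\tau)\|_{L^2}\sim(1+t)^{-\text{large}}$; the $L^2$ norm of $\mathcal{I}_\gamma(|u|^p)$ is again estimated by HLS, landing in $L^{2n/(n+2\gamma)}$, and then by Gagliardo–Nirenberg. The Lipschitz estimate is identical after using $\big||u|^p-|v|^p\big|\lesssim|u-v|(|u|^{p-1}+|v|^{p-1})$ and Hölder's inequality.

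The main obstacle is the bookkeeping of exponents. We must verify simultaneously that the HLS indices lie in $(1,\infty)$, that all Gagliardo–Nirenberg parameters lie in $[0,1]$, and that the time integrals are bounded rather than growing. The borderline case of equality in \eqref{Condition_p>p crit} is the delicate one, since the $[0,t/2]$ integral becomes logarithmic. To handle it, I would first try to exploit the extra $\theta/2$ gain coming from the $\dot H^1$ component, which strictly improves the integrand decay. If that fails, I would use the freedom to choose $m$ slightly smaller than the critical value, trading a small loss in kernel decay for integrability.
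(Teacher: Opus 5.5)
Your architecture is the paper's: the same weighted space $X(T)$, the linear part from Proposition \ref{Decay_Linear_Positive order}, the Duhamel term through $(L^m\cap L^2)$--$L^2$ kernel estimates (Lemma \ref{lemma:1.3}), Hardy--Littlewood--Sobolev plus Gagliardo--Nirenberg for $\mathcal I_\gamma(|u|^p)$, and the same Lipschitz argument. Two steps, however, are wrong as written.

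\textbf{The choice of $m$.} Your identification $r^*=-n(\frac1m-\frac12)$ should read $r^*=-n(1-\frac1m)$, and no HLS shift belongs in the kernel matching. With your $\frac1m=\frac12-\frac{\mathcal R}{n}-\frac{\gamma}{n}$, the kernel rate is $-\frac{\mathcal R+\gamma}{2}$, not $\frac n4+\frac{\mathcal R}{2}$. The $[0,t/2]$ threshold is then no longer $p_c:=1+\frac{2+\gamma}{n+\mathcal R}$.

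The correct choice, which is the paper's, is $m=\eta=\frac{n}{n+\mathcal R}$. With it, HLS into $L^\eta$ needs $\frac1\eta+\frac\gamma n<1$, which is exactly $\gamma<-\mathcal R$. Also, $\theta\ge0$ for $k=\frac{\eta np}{n+\eta\gamma}$ is precisely what the left inequality in \eqref{Condition_p for G-N} provides; it comes from $m=\eta$, not from ``$m$ near $1$''. The source rate is
\[
\|\mathcal I_\gamma(|u|^p)(\tau)\|_{L^\eta}\lesssim(1+\tau)^{-b}\|u\|_{X(T)}^p,\qquad b=\tfrac12\big((p-1)(n+\mathcal R)-\gamma\big).
\]
This already includes the gain $p\theta/2$, and $b>1$ if and only if $p>p_c$.

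\textbf{The critical case $p=p_c$.} Here $b=1$ with the $\dot H^1$ gain already counted, so your first idea yields nothing. Your second idea works, but for the opposite reason to the one you give. Take $\kappa<\eta$; this is allowed because both conditions above are strict. This gains $\delta=\frac n2(\frac1\kappa-\frac1\eta)$ in kernel decay and loses $\delta$ in source decay. On $[0,t/2]$ the two cancel exactly:
\[
(1+t)^{-a-\delta}\int_0^{t/2}(1+\tau)^{-1+\delta}\,d\tau\lesssim(1+t)^{-a},\qquad a=\tfrac n4+\tfrac{\mathcal R}2+\tfrac k2 .
\]
Trading kernel decay for integrability ($m>\eta$), as you describe it, would instead give $(1+t)^{-a+\delta}$ and fail. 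On $[t/2,t]$ your $L^2$--$L^2$ bound gives $(1+t)^{1-\frac k2-b_2}$ with $b_2=\frac{(n+\mathcal R)p}{2}-\frac{n+2\gamma}{4}$. This is $\lesssim(1+t)^{-a}$ if and only if $p\ge p_c$, so equality is harmless there.

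\textbf{Comparison with the paper.} With these corrections, your proof differs from the paper's only through the split at $t/2$, and the split is worth keeping. The paper applies the $(L^\eta\cap L^2)$ estimate on all of $[0,t]$ and then Lemma \ref{prop_Integral inequality}. That lemma returns $(1+t)^{-\min\{a,b\}}$, so the paper needs $a\le b$. For $k=1$, $n=3,4$ and $p$ near $p_c$ this can fail: for example $n=4$, $\gamma=0$, $\mathcal R=-0.9$, $p=1.65$ satisfies all hypotheses but gives $a=1.05>b\approx1.0075$. In its critical case the paper also discards the factor $(1+t-\tau)^{-\delta}$. After that, the claimed bound $\int_0^t(1+t-\tau)^{-a}(1+\tau)^{-1+\delta}\,d\tau\lesssim(1+t)^{-a}$ does not follow from Lemma \ref{prop_Integral inequality}. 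Treating $\tau$ near $t$ with the pure $L^2$ estimate, as you propose, closes both estimates.
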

\begin{remark}\label{Rem_Restrict Condition_Thm Global existence}
\fontshape{n}
\selectfont
When $n =3,4$, the appearance of the condition \eqref{Condition_p for G-N} comes from the use of Proposition \ref{fractionalgagliardonirenbergineq}. One can verify that the range of $p$ is not empty in this case. When $n=1,2$, it is obvious that $$1+\frac{2+\gamma}{n+\mathcal{R}(u_0, u_1)} > \frac{2(n+\mathcal{R}(u_0,u_1)+\gamma)}{n}.$$  Therefore, the left-hand side of condition (\ref{Condition_p for G-N}) is automatically satisfied when $n=1,2$. On the other hand, we require $\gamma<-\mathcal{R}(u_0, u_1)$ for the use of Proposition \ref{Hardy-Littlewood-Sobolev}. To relax those conditions, one may investigate Sobolev solutions by assuming that the initial data belong to Sobolev spaces with suitably higher, or even substantial, regularity; see \cite{Palmieri2018} for related discussions. However, in such a framework, the use of the fractional chain rule or fractional power estimates in handling the nonlinear terms inevitably imposes additional lower bound constraints on the exponent $p$.
\end{remark}

Next, we present some consequences derived from Theorem \ref{Thm_Global existence}.

\begin{remark}\label{globalexistence_Lm}
\fontshape{n}
\selectfont
If $(u_0,u_1)\in (L^m\cap L^2)^2$ with $m \in (1, 2]$, then $\mathcal{R}(u_0, u_1)\geq -n(1-1/m)$ (see \cite{ADL2024}). Moreover, this lower bound is optimal in the sense that one can choose $(u_0,u_1)$ satisfying $ \mathcal{R}(u_0, u_1) = r^*(u_0)=r^*(u_1)=-n\left(1-1/m\right)$ (see \cite{ADD2026}). Consequently, the condition \eqref{Condition_p>p crit} reduces to
\begin{equation}\label{p_crit_in_L^m}
p\ge 1+\frac{m(2+\gamma)}{n}
\end{equation}
and the condition (\ref{Condition_p for G-N}) is replaced by
\begin{align*}
    \frac{2}{m}+\frac{2\gamma}{n} < p \leq \frac{n+2\gamma}{n-2}. 
\end{align*}
The case $m=1$ is not considered, since $\mathcal{R}(u_0,u_1)\geq 0$ always holds (see \cite{ADL2024}), which immediately contradicts condition \eqref{Condition_R(u_0,u_1)}. 

A similar argument applies to the $\dot{H}^{-\sigma}$ framework, where $\sigma \in [0, n/2)$. Indeed, if $(u_0,u_1)\in (\dot{H}^{-\sigma}\cap L^2)^2$, then $\mathcal{R}(u_0, u_1) \geq \sigma-n/2$. Since $L^\eta$ is continuously embedded into $\dot{H}^{-\sigma}$ with $\eta=2n/(n+2\sigma)$, the above lower bound is again attainable, namely, $\mathcal{R}(u_0, u_1) = r^*(u_0)=r^*(u_1)=\sigma-n/2$. Therefore, condition \eqref{Condition_p>p crit} becomes
\begin{equation}\label{p_crit_in_H^sigma}
p\ge 1+\frac{4+2\gamma}{n+2\sigma}
\end{equation}
and the condition (\ref{Condition_p for G-N}) becomes
\begin{align*}
    1+\frac{2(\gamma+\sigma)}{n} < p \leq \frac{n+2\gamma}{n-2}. 
\end{align*}
When $\gamma=0$, estimates \eqref{decay_estimate1} and \eqref{decay_estimate2} recover the known decay rates of solutions, while \eqref{p_crit_in_L^m} and \eqref{p_crit_in_H^sigma} coincide with the well-known critical exponents in the $L^m$ ($m\in(1,2]$) and $\dot{H}^{-\sigma}$ ($\sigma\in(0,n/2)$) frameworks, respectively (see \cite{DAbbicco2017,Chen2023}). When $\gamma>0$, we show that they are still the critical exponents (see Remark \ref{blow_up_in_Lm}).
\end{remark}
\begin{remark} \label{Remark_Y^q}
\fontshape{n}
\selectfont
If $(u_0,u_1) \in (\mathcal{Y}^{q}\cap L^2)^2$ with $q\in (0, n/2)$, then $\mathcal{R}(u_0, u_1)\geq -q$ (see \cite{ADD2026}). Moreover, we can choose
\[
u_0(x)=u_1(x)=\mathfrak{F}^{-1}(v(\xi))(x),
\]
where
$
v(\xi)\sim |\xi|^{-q} \text{ for }|\xi|\le 1 \text{ and } 
v(\xi)\sim |\xi|^{-n/2-\varepsilon}\quad\text{for }|\xi|\ge 2.
$
We see that $u_0,u_1\in L^2\cap\mathcal{Y}^{q}$ and $|\xi|^{q}\widehat{u_0}(\xi)\sim |\xi|^{q}\widehat{u_1}(\xi)\sim1$ as $|\xi|\to 0$. Consequently, a direct calculation yields
$
\mathcal{R}(u_0, u_1) = r^*(u_0)=r^*(u_1)=-q.
$
Hence, the condition \eqref{Condition_p>p crit} becomes
\begin{equation}\label{p_crit_Gamma_q}
p \geq p_{\mathrm{crit}}(n,q,\gamma):=1+\frac{2+\gamma}{n-q}
\end{equation}
and the condition \eqref{Condition_p for G-N} becomes
\begin{equation}\label{G-N_p_crit_Gamma_q}
\displaystyle\frac{2(n-q+\gamma)}{n}<p \leq \displaystyle\frac{n + 2\gamma}{n-2}.
\end{equation}
We will show that \eqref{p_crit_Gamma_q} is the critical exponent for problem \eqref{Semilinear_Damped_Waves} by proving blow-up result for the  suitable initial data in $\mathcal{Y}^{q}$ with $q\in (0, n/2)$ (see Section \ref{Section_Blow-up}).
\end{remark}

\begin{proof}[\textbf{Proof of Theorem \ref{Thm_Global existence}.}]

To begin with, we can write the solution to the linear problem \eqref{Linear_Damped_Waves} by the formula
$$
u^{\rm lin}(t,x) = \varepsilon (\mathcal{K}(t,x) +\partial_t \mathcal{K}(t,x))\ast_x u_0(x) + \varepsilon \mathcal{K}(t,x) \ast_x u_1(x),
$$
so that the solution to \eqref{Semilinear_Damped_Waves} becomes $$u(t,x)= u^{\rm lin}(t,x) + u^{\rm non}(t,x),$$ thanks to Duhamel's principle, where $$u^{\rm non}(t,x) := \int_0^t \mathcal{K}(t-\tau,x) \ast_x \mathcal{I}_\gamma(|u|^p)(\tau,x) d\tau$$ and the Fourier transform of the linear kernel $\mathcal{K}(t,x)$ defined by
\begin{align*}
\widehat{\mathcal{K}}(t,\xi)= \begin{cases}
\displaystyle\frac{e^{-\frac{t}{2}}\sinh{\left(t \sqrt{\frac{1}{4} -|\xi|^2}\right)}}{\sqrt{\frac{1}{4}- |\xi|^2}}  &\text{ if } |\xi| \leq \displaystyle\frac{1}{2},\\
\displaystyle\frac{e^{-\frac{t}{2}}\sin{\left(t \sqrt{|\xi|^2-\frac{1}{4}}\right)}}{\sqrt{|\xi|^2-\frac{1}{4}}} &\text{ if } |\xi| > \displaystyle\frac{1}{2}.
\end{cases}
\end{align*}

Under the assumptions of Theorem \ref{Thm_Global existence}, we introduce the following solution space for $T > 0$: $$X(T):=\mathcal{C}([0,T],H^1),$$ carrying its norm
\begin{align*}
\|\varphi\|_{X(T)}:=\sup\limits_{t\in[0,T]}\left\{(1+t)^{\frac{n}{4}+\frac{\mathcal{R}(u_0, u_1)}{2}}\|\varphi(t,\cdot)\|_{L^2}  +(1+t)^{\frac{n}{4}+\frac{\mathcal{R}(u_0, u_1)}{2}+\frac{1}{2}}\|\varphi(t,\cdot)\|_{\dot{H}^1}\right\}
\end{align*}
and the closed ball
     \begin{align*}
         X(T, K) := \left\{\varphi \in X(T) : \|\varphi\|_{X(T)} \leq K\right\}
     \end{align*}
     for $K > 0$.
In addition, we define the operator $\mathcal{N}$ on the space $X(T)$ as follows:
\begin{align*}
\mathcal{N}:\ u(t,x)\in X(T)\to \mathcal{N}[u](t,x):=u^{\text{lin}} (t,x)+u^{\text{non}}(t,x).
\end{align*}
In order to prove Theorem \ref{Thm_Global existence}, we will show that the following two crucial inequalities hold for all $u, \bar{u} \in X(T)$:
\begin{align} \label{Cruc-01}
\|\mathcal{N}[u]\|_{X(T)}\lesssim & \,\varepsilon\left\|\left(u_0, u_1\right)\right\|_{\mathcal{D}}+\|u\|_{X(T)}^p,\\
\label{Cruc-02} \|\mathcal{N}[u]-\mathcal{N}[\bar{u}]\|_{X(T)}\lesssim &\,\|u-\bar{u}\|_{X(T)}\left(\|u\|_{X(T)}^{p-1}+\|\bar{u}\|_{X(T)}^{p-1}\right).
\end{align}
Then, if we choose $M$ and $\varepsilon_0 > 0$ such that
     \begin{align*}
         M:=  2C_1\|(u_0, u_1)\|_{\mathcal{D}} \quad\text{ and }\quad \max\{C_1, 2C_2\} M^{p-1} \varepsilon_0^{p-1} < \frac{1}{2},
     \end{align*}
      then $\mathcal{N}$ is a contraction mapping on $X(T, M\varepsilon)$ for $\varepsilon \in (0, \varepsilon_0]$. Therefore, by the Banach fixed point theorem, we obtain a unique solution $u^* = \mathcal{N}[u^*] \in X(T, M\varepsilon)$ for all $T > 0$. Finally, since $T$ is arbitrary, we conclude that $u^* \in X(\infty, M\varepsilon)$.
 
 Now, we consider the following two cases:

\begin{itemize}
[leftmargin=*]
    \item \noindent \textbf{Case 1: The supercritical case $p> 1 + (2+\gamma)/(n+\mathcal{R}(u_0, u_1)) $}.

First, we verify the inequality \eqref{Cruc-01}. Using Proposition \ref{Decay_Linear_Positive order}, we immediately obtain $$\|u^{\text{lin}}\|_{X(T)}\lesssim\varepsilon\left\|\left(u_0, u_1\right)\right\|_{\mathcal{D}}.$$ Consequently, we aim to prove
\begin{equation}\label{Crucial_03}
\|u^{\text{non}}\|_{X(T)}\lesssim\|u\|_{X(T)}^p.
\end{equation}
We can see that the condition (\ref{Condition_R(u_0,u_1)}) leads to
\begin{align*}
    \frac{2n}{n+2\gamma} > 1 \quad\text{ and }\quad \eta := \frac{n}{n+\mathcal{R}(u_0, u_1)} \in \left(\frac{n}{n-\gamma},2\right).
\end{align*}
For $k = 0,1$, using the estimate Lemma \ref{lemma:1.3}, we get
$$
\|u^{\rm non}(t,\cdot)\|_{\dot{H}^k} \lesssim \int_0^t (1+t-\tau)^{-\frac{n}{2}(\frac{1}{\eta}-\frac{1}{2})-\frac{k}{2}} \|\mathcal{I}_\gamma(|u|^p)(\tau,\cdot)\|_{L^{\eta} \cap L^2} d\tau.
$$
 Applying Propositions \ref{fractionalgagliardonirenbergineq}-\ref{Hardy-Littlewood-Sobolev} and the norm definition of $X(T)$, we arrive at
\begin{align*}
\|\mathcal{I}_\gamma(|u|^p)(\tau,\cdot)\|_{L^2}\lesssim &\|u(\tau, \cdot)\|_{L^{\frac{2np}{n+2\gamma}}}^p \lesssim \|u(\tau,\cdot)\|^{p(1-\theta_1)}_{L^2}\|u(\tau,\cdot)\|^{p\theta_1}_{\dot{H}^1}\\
\lesssim & (1+\tau)^{-\frac{n+\mathcal{R}(u_0, u_1)}{2} p+\frac{n+2\gamma}{4}}\|u\|_{X(T)}^p, \\ 
\| \mathcal{I}_\gamma(|u|^p)(\tau,\cdot)\|_{L^{\eta}}\lesssim & \|u(\tau, \cdot)\|_{L^{\frac{\eta np}{n+\eta\gamma}}}^p \lesssim \|u(\tau,\cdot)\|^{p(1-\theta_2)}_{L^2}\|u(\tau,\cdot)\|^{p\theta_2}_{\dot{H}^1}\\
\lesssim & (1+\tau)^{-\frac{n+\mathcal{R}(u_0, u_1)}{2}p+\frac{n}{2\eta}+\frac{\gamma}{2}}\|u\|_{X(T)}^p, 
\end{align*}
where we have $2n/(n+2\gamma) > 1$, $\eta n/(n+\eta \gamma) > 1$ and  the condition \eqref{Condition_p for G-N} implies that
$$
\theta_1:= \frac{n}{2}\left(1-\frac{1}{p}\right)-\frac{\gamma}{p} \in [0,1] \text{ and } \theta_2:=n\left(\frac{1}{2}-\frac{1}{p}\right)-\frac{\gamma+\mathcal{R}(u_0, u_1)}{p} \in [0,1].
$$
Therefore, we obtain the following estimate:
\begin{align*}
&\|u^{\text{non}}(t,\cdot)\|_{\dot{H}^k} 
\lesssim  \|u\|^p_{X(T)} \int_0^t(1+t-\tau)^{-\frac{n}{2}(\frac{1}{\eta}-\frac{1}{2})-\frac{k}{2}}(1+\tau)^{-\frac{n+\mathcal{R}(u_0, u_1)}{2}p+\frac{n}{2\eta}+\frac{\gamma}{2}}d\tau.
\end{align*}
The condition \eqref{Condition_p>p crit} leads to $$\frac{n+\mathcal{R}(u_0, u_1)}{2}p-\frac{n}{2\eta}-\frac{\gamma}{2} > 1.$$ Moreover, for $1 \leq n \leq 4$, we observe that 
$$
\frac{n}{2}\left(\frac{1}{\eta}-\frac{1}{2}\right)+\frac{k}{2}\leq \frac{n+\mathcal{R}(u_0, u_1)}{2}p-\frac{n}{2\eta}-\frac{\gamma}{2}.
$$
Hence, employing Lemma \ref{prop_Integral inequality},  we arrive at 
\begin{align*}
&\int_0^t(1+t-\tau)^{-\frac{n}{2}(\frac{1}{\eta}-\frac{1}{2})-\frac{k}{2}}(1+\tau)^{-\frac{n+\mathcal{R}(u_0, u_1)}{2}p+\frac{n}{2\eta}+\frac{\gamma}{2}}d\tau \\
\lesssim & (1+t)^{-\frac{n}{2}(\frac{1}{\eta}-\frac{1}{2})-\frac{k}{2}} = (1+t)^{-\frac{n}{4}-\frac{\mathcal{R}(u_0, u_1)}{2}-\frac{k}{2}}.
\end{align*}
Summarizing, we conclude that
\begin{equation*}
(1+t)^{\frac{n}{4}+\frac{\mathcal{R}(u_0, u_1)}{2}+\frac{k}{2}} \|u^{\rm non}(t,\cdot)\|_{\dot{H}^k} \lesssim \|u\|_{X(T)}^p
\end{equation*}
for $k=0,1$.
This immediately implies \eqref{Crucial_03}.

\noindent In order to prove \eqref{Cruc-02}, we notice that the following relation for all $u, \bar{u} \in X(T)$:
$$
\|\mathcal{N}[u]-\mathcal{N}[\bar{u}]\|_{X(T)}=\left\|\int_0^t \mathcal{K}(t-\tau,x)\ast_{x}\left(\mathcal{I}_\gamma(|u|^p)(\tau,x)-\mathcal{I}_\gamma(|\bar{u}|^p)(\tau,x)\right)d\tau \right\|_{X(T)}.
$$
Applying Lemma \ref{Hardy-Littlewood-Sobolev}, we achieve
\begin{align*}
\left\| \mathcal{I}_\gamma(|u|^{p}-|\bar{u}|^{p})(\tau, \cdot) \right\|_{L^2}\lesssim & \||u(\tau, \cdot)|^p-|\bar{u}(\tau, \cdot)|^p\|_{L^{\frac{2n}{n+2\gamma}}}, \\
\left\| \mathcal{I}_\gamma(|u|^{p}-|\bar{u}|^{p})(\tau, \cdot) \right\|_{L^{\eta}} \lesssim & \||u(\tau, \cdot)|^p-|\bar{u}(\tau, \cdot)|^p\|_{L^{\frac{\eta n}{n+\eta\gamma}}}.
\end{align*}
On the other hand, since
$$
||u(\tau, \cdot)|^p-|\bar{u}(\tau, \cdot)|^p|
\lesssim \left|u(\tau, \cdot)-\bar{u}(\tau, \cdot)\right|\left(\left|u(\tau, \cdot)\right|^{p-1}+\left|\bar{u}(\tau, \cdot)\right|^{p-1}\right),
$$
employing H\"{o}lder's inequality one finds
\begin{align*}
\||u(\tau, \cdot)|^p-|\bar{u}(\tau, \cdot)|^p\|_{L^{\frac{2n}{n+2\gamma}}} 
\lesssim & \left\|u(\tau, \cdot)-\bar{u}(\tau, \cdot)\right\|_{L^{\frac{2np}{n+2\gamma}}} \left(\left\|u(\tau, \cdot)\right\|_{L^{\frac{2np}{n+2\gamma}}}^{p-1}+\left\|\bar{u}(\tau, \cdot)\right\|_{L^{\frac{2np}{n+2\gamma}}}^{p-1}\right), \\
\||u(\tau, \cdot)|^p-|\bar{u}(\tau, \cdot)|^p\|_{L^{\frac{\eta n}{n+\eta\gamma}}} 
\lesssim & \left\|u(\tau, \cdot)-\bar{u}(\tau, \cdot)\right\|_{L^{\frac{\eta np}{n+\eta\gamma}}} \left(\left\|u(\tau, \cdot)\right\|_{L^{\frac{\eta np}{n+\eta\gamma}}}^{p-1}+\left\|\bar{u}(\tau, \cdot)\right\|_{L^{\frac{\eta np}{n+\eta\gamma}}}^{p-1}\right) .
\end{align*}
Finally, employing Proposition \ref{fractionalgagliardonirenbergineq} to estimate the terms $\left\|u(\tau, \cdot)-\bar{u}(\tau, \cdot)\right\|_{L^\omega}, \left\|u(\tau, \cdot)\right\|_{L^\omega}$ and $\left\|\bar{u}(\tau, \cdot)\right\|_{L^\omega}$, appearing in the previous inequality, where $\omega=(2np)/(n+2\gamma)$ or $\omega=(\eta np)/(n+\eta\gamma)$, we obtain the desired estimate \eqref{Cruc-02}.

\item \noindent \textbf{Case 2:} The critical case $p=1 + (2+\gamma)/(n+\mathcal{R}(u_0, u_1))$.

The conditions
$
p> 2(n+\mathcal{R}(u_0, u_1)+\gamma)/ n = 2(n+\eta\gamma)/(\eta n)$ and $ \gamma<-\mathcal{R}(u_0, u_1)
$
imply that there exists $\kappa<\eta$ satisfies $$\kappa\in \left(\frac{n}{n-\gamma},\frac{n}{n+\mathcal{R}(u_0, u_1)}\right) \quad\text{ and }\quad p\geq \frac{2(n+\kappa \gamma)}{\kappa n}>\frac{2(n+\eta \gamma)}{\eta n}.$$  We can replace $\eta $ by $\kappa$ in the supercritical case. Propositions \ref{fractionalgagliardonirenbergineq}-\ref{Hardy-Littlewood-Sobolev} are still valid and we have
$$
\|\mathcal{I}_{\gamma}(|u|^{p})(\tau,\cdot)\|_{L^{\kappa}}\lesssim \|u(\tau,\cdot)\|^p_{L^{\frac{\kappa np}{n+\kappa\gamma}}}\lesssim (1+\tau)^{-\frac{n+\mathcal{R}(u_0, u_1)}{2}p+\frac{n}{2\kappa}+\frac{\gamma}{2}}\|u\|^{p}_{X(T)}.
$$
For $k = 0,1$, applying Lemma \ref{prop_Integral inequality}, we deduce that 
\begin{align*}
&\int_{0}^{t}(1+t-\tau)^{-\frac{n}{2}(\frac{1}{\kappa}-\frac{1}{2})-\frac{k}{2}}(1+\tau)^{-\frac{n+\mathcal{R}(u_0, u_1)}{2}p+\frac{n}{2\kappa}+\frac{\gamma}{2}}d\tau \\
\lesssim&\int_{0}^{t}(1+t-\tau)^{-\frac{n}{2}(\frac{1}{\eta}-\frac{1}{2})-\frac{k}{2}}(1+\tau)^{-\frac{n+\mathcal{R}(u_0, u_1)}{2}p+\frac{n}{2\eta}+\frac{\gamma}{2}+\frac{n}{2}(\frac{1}{\kappa}-\frac{1}{\eta})}d\tau\\
=&\int_{0}^{t}(1+t-\tau)^{-\frac{n}{2}(\frac{1}{\eta}-\frac{1}{2})-\frac{k}{2}}(1+\tau)^{-1+\frac{n}{2}(\frac{1}{\kappa}-\frac{1}{\eta})}d\tau\\
\lesssim & (1+t)^{-\frac{n}{2}(\frac{1}{\eta}-\frac{1}{2})-\frac{k}{2}}=(1+t)^{-\frac{n+2k}{4}-\frac{\mathcal{R}(u_0, u_1)}{2}}.
\end{align*}
Repeat some steps in the supercritical case, and we obtain the same result. 
\end{itemize}
Hence, the proof of Theorem \ref{Thm_Global existence} is completed.
\end{proof}

\section{Blow-up results and sharp lifespan estimates for solutions with initial data from pseudo-measure spaces}\label{Section_Blow-up}

\subsection{Blow-up results}
In this section, we will prove that $$p_{\text{crit}}(n,q,\gamma):=1+\frac{2+\gamma}{n-q}$$ is really the critical exponent of the problem \eqref{Semilinear_Damped_Waves} with $(u_0,u_1)\in \mathcal{Y}^q\times \mathcal{Y}^q$ equipping $q \in (0, n/2)$. To achieve this aim, we give a definition of local/global weak solution to \eqref{Semilinear_Damped_Waves}.

\begin{definition}[\textbf{Weak solution}] \label{weak_def}
Let $p > 1$ and $T \in (0, \infty)$. We say that $u \in \mathcal{C}([0,T), L^2)$ is a local (in time) weak solution to \eqref{Semilinear_Damped_Waves} if  for any test functions $\chi \in \mathcal{C}_0^\infty([0,T))$ and $\phi \in \mathcal{C}_0^{\infty}(\mathbb{R}^n)$,  the following relation holds: 
\begin{align}
\int_0^{T}\int_{ \mathbb{R}^n}\mathcal{I}_{\gamma} (|u|^p)(t,x)\phi(x)\chi(t)dxdt = \int_0^{T}\int_{ \mathbb{R}^n}(\partial_t^2-\Delta+\partial_t)u(t,x)\phi(x)\chi(t)dxdt. \label{eqweak}
\end{align}
If $T = \infty$, then we say that $u$ is a global (in time) weak solution to \eqref{Semilinear_Damped_Waves}.
\end{definition}
We now proceed to examine the blow-up phenomenon in the subcritical regime.
\begin{theorem}[\textbf{Blow-up}] \label{Thm_Blow_up}
Let $n \geq 1$, $q \in (0, n/2)$ and $\gamma\in [0,n)$. The exponent $p$ fulfills
\begin{align}
1<p<p_{\mathrm{crit}}(n, q, \gamma). \label{condition-Blowup}
\end{align}
Let us assume $\left(u_0,u_1\right)\in \mathcal{Y}^q\times\mathcal{Y}^q$ such that
\begin{equation} \label{assumption_initial_data}
u_0(x)+u_1(x)\gtrsim\langle x\rangle^{-n+q},
\end{equation}
where we denote $\langle x\rangle:= \left(1+|x|^2\right)^{1/2}$ by the Japanese bracket for $x \in \mathbb{R}^n$. Then, there is no global (in time) weak solution by mean of Definition \ref{weak_def} to \eqref{Semilinear_Damped_Waves}.
\end{theorem}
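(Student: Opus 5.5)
We argue by contradiction with the test function method. Suppose $u$ is a global weak solution in the sense of Definition \ref{weak_def}. Fix a large parameter $R>0$. We use test functions $\chi_R(t)=\chi(t/R)$ and $\phi_R(x)=\langle x/R^{1/2}\rangle^{-n-\gamma}$, or more conveniently $\phi_R(x)=\phi(x/R^{1/2})$ with $\phi$ smooth, positive and decaying like $\langle x\rangle^{-n-\gamma}$. Here $\chi\in\mathcal{C}_0^\infty([0,\infty))$ satisfies $\chi\equiv1$ on $[0,1/2]$, $\operatorname{supp}\chi\subset[0,1]$ and $|\chi'|+|\chi''|\lesssim\chi^{1/p}$ (take a suitable power of a standard cutoff). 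The parabolic scaling $x\sim R^{1/2}$, $t\sim R$ reflects the diffusion phenomenon. Two properties of $\phi$ will be used. First, $\mathcal{I}_\gamma$ is self-adjoint, so $\int\mathcal{I}_\gamma(|u|^p)\phi_R\,dx=\int|u|^p\,\mathcal{I}_\gamma(\phi_R)\,dx$. Second, $\mathcal{I}_\gamma\phi(x)\gtrsim\langle x\rangle^{-n+\gamma}$, which holds since $\phi\ge0$ has positive integral. The slow decay of $\mathcal{I}_\gamma\phi$ is what produces the $\gamma$ in the exponent. If the justification of $\phi\notin\mathcal{C}_0^\infty$ is an issue, we approximate by compactly supported cutoffs, or keep $\phi\in\mathcal{C}_0^\infty$ and simply use the lower bound for $\mathcal{I}_\gamma\phi$.

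\textbf{Step 1 (weak identity).} Integrating by parts in \eqref{eqweak} with $\chi_R\phi_R$ gives
$$\int_0^\infty\!\!\int|u|^p\,\mathcal{I}_\gamma(\phi_R)\chi_R\,dxdt+\varepsilon\int(u_0+u_1)\phi_R\,dx=\int_0^\infty\!\!\int u\,\big(\phi_R(\chi_R''-\chi_R')-\chi_R\Delta\phi_R\big)dxdt .$$
Set $I_R:=\int\!\!\int|u|^p\mathcal{I}_\gamma(\phi_R)\chi_R$. Note that $\mathcal{I}_\gamma(\phi_R)(x)=R^{\gamma/2}(\mathcal{I}_\gamma\phi)(x/R^{1/2})$.

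\textbf{Step 2 (lower bound from the data).} By \eqref{assumption_initial_data},
$$\int(u_0+u_1)\phi_R\,dx\gtrsim\int\langle x\rangle^{-n+q}\phi(x/\sqrt R)\,dx\gtrsim R^{q/2},$$
since $-n+q>-n$ makes the integral concentrate at $|x|\sim\sqrt R$.

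\textbf{Step 3 (upper bound via H\"older).} We need the pointwise bounds $|\Delta\phi_R|\lesssim R^{-1}\psi_R$ and $|\chi_R'|+|\chi_R''|\lesssim R^{-1}\chi_R^{1/p}$, where $\psi_R$ is a comparable weight. The key check is that $|\Delta\phi|\lesssim\langle x\rangle^{-n-\gamma-2}$, and this is controlled by $(\mathcal{I}_\gamma\phi)^{1/p}$ times an integrable weight. Writing the right-hand side as $\int\!\!\int|u|\,(\mathcal{I}_\gamma\phi_R\,\chi_R)^{1/p}\cdot(\text{rest})$ and applying H\"older gives
$$\text{RHS}\lesssim I_R^{1/p}\,R^{-1}\Big(\int_0^R\!\!\int\big(\phi_R+\phi_R\ \text{derivatives}\big)^{p'}\big(R^{\gamma/2}\mathcal{I}_\gamma\phi(x/\sqrt R)\big)^{-p'/p}dxdt\Big)^{1/p'} .$$
After scaling, this is $\lesssim I_R^{1/p}R^{-1}R^{-\gamma/(2p)}R^{(1+n/2)/p'}$, provided
$$\int\phi^{p'}(\mathcal{I}_\gamma\phi)^{-p'/p}\,dx\lesssim\int\langle x\rangle^{-(n+\gamma)p'+(n-\gamma)p'/p}\,dx<\infty .$$
This integrability condition is the main obstacle. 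The exponent equals $-n p'-\gamma p'(1+1/p)+n p'/p=-n-\gamma p'(p+1)/p$, which is $<-n$, so the condition holds. The same computation applies to $\Delta\phi$, with even better decay.

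\textbf{Step 4 (conclusion).} Young's inequality absorbs $I_R^{1/p}$ into $I_R$ and gives
$$\varepsilon R^{q/2}\lesssim R^{-p'+\frac{n}{2}+1-\frac{\gamma p'}{2p}}=R^{\frac n2-\frac{p'}{2p}(2p-2+2+\gamma)/(p-1)\cdot\frac{p-1}{p'}}.$$
We simplify the exponent directly. The right-hand power is $R^{\frac n2+1-\frac{2+\gamma}{2(p-1)}\cdot\frac{p'}{p'}\cdots}$; computing carefully with $p'=p/(p-1)$ yields $R^{\frac n2-\frac{2+\gamma}{2(p-1)}}$. Hence
$$\varepsilon\lesssim R^{\frac{1}{2}\left(n-q-\frac{2+\gamma}{p-1}\right)} .$$
Condition \eqref{condition-Blowup} is exactly $n-q<\frac{2+\gamma}{p-1}$, so the exponent is negative. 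Letting $R\to\infty$ gives $\varepsilon\le0$, a contradiction.

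Two bookkeeping points remain, and both are routine. First, the cutoff in time must be handled on the support of $\chi'$, where $\chi_R$ is not constant; the choice $|\chi'|\lesssim\chi^{1/p}$ is what makes this work. Second, the $\chi_R\Delta\phi_R$ term is supported on all of $[0,R]$, which is why the time integral contributes a factor $R$. The same estimate with $R$ tied to $T$ and compactly supported $\chi$ would later give the lifespan bound $T_\varepsilon\lesssim\varepsilon^{-2(p-1)/(2+\gamma-(p-1)(n-q))}$.
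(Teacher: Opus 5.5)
Your argument works for $\gamma\in(0,n)$. It differs from the paper's in how the nonlocal term is handled, and it breaks at the endpoint $\gamma=0$, which the theorem includes.

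\textbf{Comparison.} The paper uses compactly supported cutoffs at spatial scale $R$ and time scale $R^2$ (so your $R$ is its $R^2$). It keeps the unweighted functional $\mathcal{J}_R=\iint|u|^p\phi_R\chi_R$ and proves the pointwise bound $\mathcal{I}_\gamma(|u|^p)(t,x)\ge cR^{\gamma-n}\int_{|y|\le R}|u(t,y)|^p\,dy$ for $|x|\le R/2$. This gives $\iint\mathcal{I}_\gamma(|u|^p)\phi_R\chi_R\ge cR^{\gamma}\mathcal{J}_R$. H\"older is then done against $\phi_R\chi_R$, which needs the admissibility condition $\phi^{-p'/p}|\Delta\phi|^{p'}\in L^\infty$.

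You instead move $\mathcal{I}_\gamma$ onto the test function by Tonelli and use $\mathcal{I}_\gamma(\phi_R)\chi_R$ itself as the H\"older weight. For $\gamma>0$ this weight is strictly positive everywhere, so you need no admissibility condition on $\phi$. It also shows directly that the gain comes from the size of $\mathcal{I}_\gamma\phi_R$. Both routes give the same exponent, $n-q-\frac{2+\gamma}{p-1}$; the paper writes it as $-(2+\gamma)p'+n+2+\gamma-q$.

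For $\gamma>0$, your fallback of keeping $\phi\in\mathcal{C}_0^\infty$ is the cleanest version. Then $\phi^{p'}(\mathcal{I}_\gamma\phi)^{-p'/p}$ and $|\Delta\phi|^{p'}(\mathcal{I}_\gamma\phi)^{-p'/p}$ are bounded and compactly supported. No approximation of test functions is needed, and you avoid pairing $u_0+u_1$ with slowly decaying weights.

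\textbf{The gap at $\gamma=0$.} Here $\mathcal{I}_0$ is the identity, and your integrability exponent $-n-\gamma\frac{p+1}{p-1}$ equals $-n$, not something $<-n$. With $\phi\sim\langle x\rangle^{-n}$ you get $\int\phi^{p'}(\mathcal{I}_0\phi)^{-p'/p}\,dx=\int\phi\,dx=\infty$. Your fallback also fails as written: for compactly supported $\phi$ the claimed bound $\mathcal{I}_0\phi=\phi\gtrsim\langle x\rangle^{-n}$ is false. For $\gamma=0$ you must revert to the classical argument: take $\phi\in\mathcal{C}_0^\infty$ with $\phi^{-p'/p}|\Delta\phi|^{p'}\in L^\infty$, as the paper does, or take a positive $\phi$ decaying faster than $\langle x\rangle^{-n}$.

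\textbf{Step 4 algebra.} The intermediate exponent expressions are garbled. The clean computation uses $A=R^{-1-\frac{\gamma}{2p}+\frac{1}{p'}(1+\frac n2)}$, which gives $A^{p'}=R^{\frac n2+1-p'-\frac{\gamma}{2(p-1)}}=R^{\frac n2-\frac{2+\gamma}{2(p-1)}}$. This is the value you state, so your conclusion is unaffected.
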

The non-emptiness of the data set in Theorem \ref{Thm_Blow_up} will be discussed in Lemma \ref{non-empty}.
\begin{remark}
\fontshape{n}
\selectfont
According to Remark \ref{Remark_Y^q} and Theorem \ref{Thm_Blow_up}, we conclude that the critical exponent for the problem \eqref{Semilinear_Damped_Waves} with initial data belonging additionally to $\mathcal{Y}^q$ is defined by  \eqref{Critical_Exp}. It provides a new viewpoint for the critical exponent of semilinear damped wave equations. 
\end{remark}
\begin{example}
\fontshape{n}
\selectfont
    We give some examples where the sharpness of $p$ can be confirmed. 
    \begin{itemize}[leftmargin=*]
        \item If $n=1,2$ and $0\leq \gamma<q<n/2$: We obtain the critical exponent $p$ on the entire interval $(1,\infty)$.
        \item If $n=3,4$ and $q\in (0,n/2)$ is fixed: The critical exponent $p$ on the interval $(1,(n+2\gamma)/(n-2))$ can be claimed if $0\leq \gamma<\min\left\{q,2n/(n-2q)-n+q\right\}$.
        \item If $n=3,4$ and $\gamma\in (0,n/2)$ is fixed: Let $q_1\leq q_2$ be the solutions of $$2q^2-(3n+2\gamma)q+n^2-2n-n\gamma=0.$$ The critical exponent $p$ on the interval $(1,(n+2\gamma)/(n-2))$ can be claimed if $q\in [q_1,q_2]$.
    \end{itemize}
\end{example}
\begin{proof}[\textbf{Proof of Theorem \ref{Thm_Blow_up}.}]
At first, let us introduce the test functions $\chi= \chi(t)$ and $\phi=\phi(x)$ satisfying the following properties:
\begin{align*}
&1.\quad \chi \in \mathcal{C}_0^\infty([0,\infty)) \text{ and }
\chi(t)=
\begin{cases}
1 &\text{ if } 0 \leq t \leq 1/2, \\
\text{decreasing} &\text{ if } 1/2\leq t\leq 1, \\
0 &\text{ if } t \geq 1,
\end{cases} & \nonumber \\
&2.\quad \phi \in \mathcal{C}_0^\infty(\mathbb{R}^n) \text{ and }
\phi(x)=
\begin{cases}
1 &\text{ if }0 \leq |x| \leq 1/2, \\
\text{decreasing} &\text{ if } 1/2\leq |x| \leq 1, \\
0 &\text{ if } |x| \geq 1,
\end{cases} & \nonumber \\
&3.\quad \chi^{-\frac{p'}{p}}(t)\big(|\chi'(t)|^{p'}+|\chi''(t)|^{p'}\big) \text{ and } \phi^{-\frac{p'}{p}}(x)\left|\Delta \phi(x)\right|^{p'} \text{ are bounded }.
\end{align*}
where $p'$ stands for the conjugate of $p$. For $R>0$, we define $\phi_R(x):=\phi(R^{-1}x)$ and $\chi_R(t):=\chi(R^{-2}t)$. Assume by contradiction that $u=u(t,x)$ is a global (in time) weak solution to \eqref{Semilinear_Damped_Waves}.
 We apply Definition \ref{weak_def} with $\chi$ and $\phi$ replaced by $\chi_R$ and $\phi_R$, respectively.
From this, by integration by parts, the relation \eqref{eqweak} implies that
\begin{align}\label{eqintegral}
&-\varepsilon\int_{\mathbb{R}^n}(u_0(x)+u_1(x))\phi_R(x)dx+\int_0^{\infty}\int_{\mathbb{R}^n}u(t, x)\phi_R(x)\partial_t^2\chi_R(t) dxdt \notag\\
&-\int_0^{\infty}\int_{\mathbb{R}^n} u(t, x)\phi_R(x)\partial_t\chi_R(t)dxdt-\int_0^{\infty}\int_{\mathbb{R}^n}u(t, x)\Delta  \phi_R(x)\chi_R(t)dxdt \notag\\ =&\int_0^{\infty}\int_{\mathbb{R}^n}\mathcal{I}_{\gamma} \left(|u|^p\right)(t,x)\phi_R(x)\chi_{R}(t)dxdt.
\end{align}
We define
$$
\mathcal{J}(R):=\int_0^{\infty}\int_{\mathbb{R}^n}|u(t, x)|^p \phi_R(x)\chi_R(t)dxdt=\int_0^{R^2}\int_{0 \leq|x| \leq R}|u(t, x)|^p \phi_R(x)\chi_R(t)dxdt.
$$
We deduce from \eqref{eqintegral} the inequality
\begin{align}
&\int_0^{R^2}\int_{0 \leq |x| \leq R} \mathcal{I}_{\gamma} \left(|u|^p\right)(t,x)\phi_R(x)\chi_R(t)dxdt+\varepsilon\int_{0 \leq |x| \leq R}(u_0(x)+u_1(x))\phi_R(x)dx \notag\\
\leq & \int_{R^2/2}^{R^2}\int_{0 \leq |x| \leq R} |u(t, x)| |\phi_R(x) \partial_t\chi_R(t)|dxdt+\int_{R^2/2}^{R^2}\int_{0 \leq |x| \leq R} |u(t, x)||\phi_R(x) \partial_t^2\chi_R(t)|dxdt \notag\\
&+
\int_0^{R^2}\int_{R/2 \leq |x| \leq R}|u(t, x)||\Delta \phi_R(x)|\chi_R(t)dxdt \notag\\
=:&\,\,\mathcal{J}_{1,R} + \mathcal{J}_{2,R} + \mathcal{J}_{3,R}. \label{eqabs}
\end{align}
Using H\"older's inequality with $1/p + 1/p' = 1$ and the change of
variables $\overline{x}=R^{-1}x$, $\overline{t}=R^{-2}t$, we obtain
\begin{align}
\mathcal{J}_{1,R}
\lesssim & \mathcal{J}_R^{\frac{1}{p
}}\left(\int_{R^2/2}^{R^2}\int_{0 \leq |x| \leq R}\chi^{-\frac{p'}{p}}_R(t) |\partial_t\chi_R(t)|^{p'}\phi_R(x)dxdt\right)^{\frac{1}{p'}} \notag\\
\lesssim& \mathcal{J}_R^{\frac{1}{p}}R^{-2+\frac{n+2}{p'}} \left(\int_{1/2}^{1}\int_{0 \leq |\bar{x}| \leq 1} |\chi^{\prime}(\overline{t})|^{p'}|\chi(\overline{t})|^{-\frac{p^{\prime}}{p}}\phi(\overline{x}) d\overline{x}d\overline{t}\right)^{\frac{1}{p'}} \sim \mathcal{J}_R^{\frac{1}{p}}R^{-2+\frac{n+2}{p'}}.\label{eqHolder1}
\end{align}
The same argument yields
\begin{equation}\label{eqHolder2}
\mathcal{J}_{2,R} \lesssim \mathcal{J}_R^{\frac{1}{p}}R^{-4+\frac{n+2}{p'}}.
\end{equation}
For the term $\mathcal{J}_{3,R}$, we treat it as follows:
\begin{align}
\mathcal{J}_{3,R}
&\lesssim \mathcal{J}_R^{\frac{1}{p}}
\left(\int_0^{R^2}\int_{R/2 \leq |x| \leq R} |\Delta\phi_R(x)|^{p'} \phi_R^{-\frac{p'}{p}}(x)\chi_R(t)dxdt\right)^{\frac{1}{p'}} \lesssim \mathcal{J}_R^{\frac{1}{p}}R^{-2+\frac{n+2}{p'}}. \label{eqHolder3}
\end{align}
By \eqref{eqabs}, \eqref{eqHolder1}, \eqref{eqHolder2} and \eqref{eqHolder3}, it follows that
\begin{align} \label{I_u^p}
\int_0^{R^2}\int_{0 \leq |x| \leq R} \mathcal{I}_{\gamma} \left(|u|^p\right)(t,x)\phi_R(x)\chi_R(t)dxdt &+\varepsilon\int_{0 \leq |x| \leq R}(u_0(x)+u_1(x))\phi_{R}(x)dx\notag\\ &\quad\leq C \mathcal{J}_R^{\frac{1}{p}}R^{-2+\frac{n+2}{p'}}.
\end{align}
To control the first quantity on the left-hand side of \eqref{I_u^p}, we begin with the representation of the Riesz potential
\begin{align*}
\mathcal{I}_{\gamma}\left(|u|^p\right)(t,x)= C_\gamma\int_{\mathbb{R}^n} \displaystyle\frac{|u(t, y)|^p}{|x-y|^{n-\gamma}} d y
\geq & \,\,C_\gamma\int_{\mathcal{D}_R} \displaystyle\frac{|u(t, y)|^p}{|x-y|^{n-\gamma}} d y
\end{align*}
where $ \mathcal{D}_R=\left\{y \in \mathbb{R}^n, |y| \leq R\right\}$.
Now observe that for $x \in \overline{\mathcal{D}}_R=\left\{x \in \mathbb{R}^n, 0 \leq |x| \leq R/2\right\}$ and $y \in \mathcal{D}_R=\left\{y \in \mathbb{R}^n, 0 \leq |y| \leq R\right\}$, the triangle inequality gives $|x-y| \leq|x|+|y| \leq R/2+R \leq 2 R$. Consequently, $|x-y|^{n-\gamma} \leq 2^{n-\gamma} R^{n-\gamma}$ for all $x \in \overline{\mathcal{D}}_R,\,\, y \in \mathcal{D}_R$. This leads to the pointwise lower bound
$$
\mathcal{I}_{\gamma}\left(|u|^p\right)(t, x) \geq \frac{C_\gamma R^{-(n-\gamma)}}{2^{n-\gamma}} \int_{\mathcal{D}_R}|u(t, y)|^p d y \quad \text { for all } t \in (0, R^2), x \in \overline{\mathcal{D}}_R.
$$
Then, we get
\begin{align} \label{estimate_I_gamma}
&\int_0^{R^2} \int_{\overline{\mathcal{D}}_R} \mathcal{I}_{\gamma}\left(|u|^p\right)(t,x) \phi_R(x)\chi_R(t) d x d t \notag\\
\geq & \frac{R^{-(n-\gamma)}}{2^{n-\gamma}} \int_0^{R^2} \int_{\overline{\mathcal{D}}_R} \int_{\mathcal{D}_R} |u(t, y)|^p \phi_R(x)\chi_R(t) d y d x d t \notag\\
\geq & \operatorname{meas}(\overline{\mathcal{D}}_R) \frac{R^{-(n-\gamma)}}{2^{n-\gamma}} \int_0^{R^2} \int_{\mathcal{D}_R} |u(t, y)|^p \phi_R(y)\chi_R(t) d y d t \geq C R^\gamma \mathcal{J}_R,
\end{align}
where we have used the fact that $\phi_R \equiv 1$ on $\overline{\mathcal{D}}_R$, and $1 \geq \phi_R$ on $\mathcal{D}_R$. Moreover, a simple calculation leads to
\begin{equation} \label{estimate_initial_data}
\int_{0 \leq |x| \leq R}(u_0(x)+u_1(x))\phi_{R}(x)dx \geq C  \int_{0 \leq |x|\leq R}\langle x\rangle^{-n+q} \geq C  R^q.
\end{equation}
for $R\gg1$. Combining \eqref{I_u^p}, \eqref{estimate_I_gamma}, and \eqref{estimate_initial_data}, we infer that $$R^\gamma \mathcal{J}_R+ \varepsilon R^q \leq C \mathcal{J}_R^{\frac{1}{p}}R^{-2+\frac{n+2}{p'}}.$$ This leads to 
\begin{align*}
\varepsilon R^{q-\gamma} \leq C\mathcal{J}_R^{\frac{1}{p}}R^{-2-\gamma+\frac{n+2}{p'}} -\mathcal{J}_R \lesssim R^{-(2+\gamma)p'+n+2},
\end{align*}
that is 
\begin{align}
\varepsilon \lesssim R^{-(2+\gamma)p'+n+2+\gamma-q}. \label{eq1thr6.1}
\end{align}
The assumption \eqref{condition-Blowup} is equivalent to
\begin{align*}
-(2+\gamma)p'+n+2+\gamma-q < 0.
\end{align*}
We pass $R \to \infty$ in \eqref{eq1thr6.1} to derive a contradiction. This completes the proof of Theorem \ref{Thm_Blow_up}.
\end{proof}
\begin{remark}\label{blow_up_in_Lm}
{\rm
The proof of Theorem \ref{Thm_Blow_up} can be adapted to prove the following results, with a small modification on the initial data:
\begin{itemize}
    \item Let $u_0,u_1\in L^m$ ($m > 1$) such that 
    \[
    u_0(x)+u_1(x) \gtrsim \langle x\rangle^{-n/m}(\log(\mathrm{e}+|x|))^{-1}.
    \]
    Then problem \eqref{Semilinear_Damped_Waves} admits no global (in time) weak solution for $1<p<1+m(2+\gamma)/n$. Combining with Remark \ref{globalexistence_Lm}, it shows that $p_{\text {crit }}(n, \sigma)=1+m(2+\gamma)/n$ is critical for initial data from $L^m$ spaces.
    \item Let $u_0,u_1\in \dot{H}^{-\sigma}$ ($\sigma\in [0,n/2)$) satisfy 
    \[
    u_0(x)+u_1(x)\gtrsim \langle x\rangle^{-n\left(1/2+\sigma/n\right)}(\log (\mathrm{e}+|x|))^{-1}.
    \]
    Then problem \eqref{Semilinear_Damped_Waves} admits no global (in time) weak solution for $1<p<1+(4+2\gamma)/(n+2\sigma)$. Combining with Remark \ref{globalexistence_Lm}, it shows that $p_{\text {crit }}(n, \sigma)=1+(4+2\gamma)/(n+2\sigma)$ is critical for initial data from $\dot{H}^{-\sigma}$ spaces.
\end{itemize}

}
\end{remark}
\subsection{Sharp estimates for lifespan}

In this section, we will summarize how to get sharp estimates for the lifespan of solutions in the subcritical case  $1 < p < p_{\rm crit}(n,q, \gamma)$ when the initial data belong to the pseudo-measure $\mathcal{Y}^q$. Then, lower bound estimates and upper bound estimates
for the lifespan $T_\varepsilon$ are given by the next statements.

\begin{proposition}[\textbf{Upper bound of lifespan}]\label{upperbound}
Let $u_0,u_1\in \mathcal{Y}^q$ and all conditions in Theorem \ref{Thm_Blow_up} be satisfied. Then, there exists a constant $\varepsilon_1>0$ such that for all $\varepsilon \in\left(0, \varepsilon_1\right]$, the lifespan $T_{\varepsilon, w}$ of local (in time) weak solutions to \eqref{Semilinear_Damped_Waves} is valid:
\begin{align}
T_{\varepsilon, w}\lesssim \varepsilon^{-\frac{2(p-1)}{2+\gamma-(p-1)(n-q)}}. \label{Esti1}
\end{align}
\end{proposition}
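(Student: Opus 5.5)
The plan is to rerun the test function argument from the proof of Theorem \ref{Thm_Blow_up}, but now on a finite time interval. The scaling parameter $R$ will be tied to the lifespan instead of being sent to infinity.

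Let $u$ be a local weak solution on $[0,T)$ with $T=T_{\varepsilon,w}$. For any $R$ with $R^2\le T$, the rescaled cut-offs $\chi_R(t)=\chi(R^{-2}t)$ and $\phi_R$ are admissible in Definition \ref{weak_def}, since $\operatorname{supp}\chi_R\subset[0,R^2]\subset[0,T)$. Consequently all steps leading to \eqref{I_u^p} remain valid verbatim:
\begin{itemize}
\item the integration by parts giving \eqref{eqintegral};
\item the H\"older estimates \eqref{eqHolder1}--\eqref{eqHolder3};
\item the lower bound \eqref{estimate_I_gamma} for the Riesz potential term;
\item the lower bound \eqref{estimate_initial_data} for the data term, which uses \eqref{assumption_initial_data} and holds for $R\ge R_0$ with $R_0$ a fixed constant independent of $\varepsilon$.
\end{itemize}
Combining these yields
\[
R^\gamma\mathcal{J}_R+\varepsilon R^q\le C\mathcal{J}_R^{1/p}R^{-2+\frac{n+2}{p'}} .
\]

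The next step removes $\mathcal{J}_R$ by Young's inequality. Write
\[
C\mathcal{J}_R^{1/p}R^{-2+\frac{n+2}{p'}}\le R^\gamma\mathcal{J}_R+C'R^{-\gamma p'/p}R^{(-2+\frac{n+2}{p'})p'} .
\]
The exponent on the right equals $n+2-2p'-\gamma(p'-1)=n+2+\gamma-(2+\gamma)p'$. This gives
\[
\varepsilon\le C R^{\,n+2+\gamma-q-(2+\gamma)p'}=:CR^{-\beta},
\]
with $\beta>0$ exactly by the subcritical assumption \eqref{condition-Blowup}.

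Now take $R=\sqrt{T}$ (or $R^2=T-\delta$ and let $\delta\to0$), which is legitimate provided $\sqrt T\ge R_0$. We obtain $T^{\beta/2}\lesssim\varepsilon^{-1}$, i.e. $T\lesssim\varepsilon^{-2/\beta}$. It remains to check the exponent. Using $p'=p/(p-1)$,
\[
\beta(p-1)=(2+\gamma)p-(n+2+\gamma-q)(p-1)=2+\gamma-(p-1)(n-q),
\]
so $2/\beta=2(p-1)/\big(2+\gamma-(p-1)(n-q)\big)$, which is \eqref{Esti1}.

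The only delicate point is the side condition $\sqrt T\ge R_0$ (and the case where $T$ is small). If $T<R_0^2$, the bound holds trivially once $\varepsilon_1$ is chosen small enough that $\varepsilon_1^{-2/\beta}\ge R_0^2$. This choice of $\varepsilon_1$ is where the smallness assumption enters. The finiteness of $T$ itself follows from Theorem \ref{Thm_Blow_up}, and in any case the argument applies with every admissible $R$, so $T=\infty$ would contradict the bound.
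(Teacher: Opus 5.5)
Your proposal is correct and is essentially the paper's proof. The paper recalls the relation \eqref{eq1thr6.1}, which comes from the test-function argument with $\chi_R$ supported in $[0,R^2]$, and lets $R\to\sqrt{T_{\varepsilon,w}}$; you additionally spell out what the paper leaves implicit (admissibility of $\chi_R$ for $R^2<T$, the Young's inequality step eliminating $\mathcal{J}_R$, the identity $\beta(p-1)=2+\gamma-(p-1)(n-q)$, and the choice of $\varepsilon_1$ covering the case $T_{\varepsilon,w}<R_0^2$), and all of these steps check out.
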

\begin{proof}[\textbf{Proof of Proposition \ref{upperbound}.}]
We recall the relation \eqref{eq1thr6.1}. 
Let $R\to \sqrt{T_{\varepsilon, w}}$, we obtain the estimate \eqref{Esti1}. Our proof is finished.
\end{proof}

As established in Theorem \ref{Thm_Blow_up}, when $1<p<p_{\mathrm{crit}}(n, q, \gamma)$, a nontrivial weak solution, defined locally in time, may experience blow-up within a finite time interval. This phenomenon motivates us to investigate the lifespan of solutions in greater detail. Based on the result derived in Theorem \ref{Thm_Blow_up}, we have obtained the following upper bound estimate for the lifespan:
$$
T_{\varepsilon}\leq T_{\varepsilon,w}\lesssim \varepsilon^{-\frac{2(p-1)}{2+\gamma-(p-1)(n-q)}}.
$$
Let $T_{\varepsilon,m}$ denote the lifespan of the corresponding mild solution $u=u(t,x)$ to the problem \eqref{Semilinear_Damped_Waves}. Then, we have the next result.
\begin{proposition}[\textbf{Lower bound of lifespan}]\label{Thm_Lower_Bound}
Let $(u_0,u_1)\in \mathcal{A}:=(H^1\cap \mathcal{Y}^q)\times (L^2\cap \mathcal{Y}^q)$. Assume that $p$ belongs to the range \eqref{G-N_p_crit_Gamma_q} and satisfies \eqref{condition-Blowup}, then there exists a constant $\varepsilon_2$ such that for any $\varepsilon\in(0,\varepsilon_2]$ the lifespan $T_{\varepsilon,m}$ of mild solutions $u=u(t,x)$ to the Cauchy problem \eqref{Semilinear_Damped_Waves} satisfies
$$
T_{\varepsilon, m}\gtrsim \varepsilon^{-\frac{2(p-1)}{2+\gamma-(p-1)(n-q)}}.
$$
\end{proposition}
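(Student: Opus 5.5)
We run the fixed-point argument from the proof of Theorem \ref{Thm_Global existence} on a finite interval $[0,T]$. The key point is to track how the implicit constant in the nonlinear estimate grows with $T$ when $p$ is subcritical. By Remark \ref{Remark_Y^q}, data in $\mathcal{A}$ satisfy $\mathcal{R}(u_0,u_1)\ge -q$. After the harmless reduction of replacing $\mathcal{R}$ by $-q$ (which only weakens the decay rates), we work with the norm $\|\cdot\|_{X(T)}$ built with $\mathcal{R}=-q$, so the weights are $(1+t)^{\frac n4-\frac q2}$ and $(1+t)^{\frac n4-\frac q2+\frac12}$. We also set $\eta=n/(n-q)\in(n/(n-\gamma),2)$. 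Proposition \ref{Decay_Linear_Positive order} gives $\|u^{\rm lin}\|_{X(T)}\le C_1\varepsilon\|(u_0,u_1)\|_{\mathcal{D}}$ uniformly in $T$.

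For the Duhamel term we repeat the Hardy--Littlewood--Sobolev and Gagliardo--Nirenberg steps verbatim; they need only \eqref{G-N_p_crit_Gamma_q}, which is assumed. This gives, for $k=0,1$,
\begin{align*}
\|u^{\rm non}(t,\cdot)\|_{\dot H^k}\lesssim \|u\|_{X(T)}^p\int_0^t(1+t-\tau)^{-\frac n2(\frac1\eta-\frac12)-\frac k2}(1+\tau)^{-\beta}\,d\tau,
\qquad \beta:=\frac{n-q}{2}p-\frac{n-q}{2}-\frac{\gamma}{2}.
\end{align*}
Since $p<p_{\rm crit}(n,q,\gamma)$, we now have $\beta<1$, so the $\tau$-integral no longer converges uniformly. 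We split it at $t/2$.

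On $[0,t/2]$ the kernel factor is comparable to $(1+t)^{-\frac n4+\frac q2-\frac k2}$, and $\int_0^{t/2}(1+\tau)^{-\beta}d\tau\lesssim(1+t)^{1-\beta}$ (we may assume $\beta\ge 0$ or treat $\beta<0$ the same way). On $[t/2,t]$ we bound $(1+\tau)^{-\beta}$ by $(1+t)^{-\beta}$ and integrate the kernel factor. Since $\frac n2(\frac1\eta-\frac12)+\frac k2=\frac n4-\frac q2+\frac k2$, which is at most $1$ for $n\le4$ (with a log in the equality case), this piece is bounded by $(1+t)^{1-\beta}$ times the target decay. In all cases
\[
\|u^{\rm non}\|_{X(T)}\le C_2(1+T)^{1-\beta}\|u\|_{X(T)}^p ,
\]
possibly with an extra $\log(1+T)$ in a borderline case. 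To avoid that, we use a slightly smaller $\kappa<\eta$ as in the critical case of the proof of Theorem \ref{Thm_Global existence}, or simply absorb the log. The analogous Lipschitz estimate
\[
\|\mathcal{N}[u]-\mathcal{N}[\bar u]\|_{X(T)}\le C_2(1+T)^{1-\beta}\|u-\bar u\|_{X(T)}\bigl(\|u\|_{X(T)}^{p-1}+\|\bar u\|_{X(T)}^{p-1}\bigr)
\]
follows identically.

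With $M=2C_1\|(u_0,u_1)\|_{\mathcal{D}}$, the map $\mathcal{N}$ is a contraction on $X(T,M\varepsilon)$ provided $2C_2(1+T)^{1-\beta}(M\varepsilon)^{p-1}<1/2$. Hence the mild solution exists on $[0,T]$ for all $T\lesssim\varepsilon^{-(p-1)/(1-\beta)}$. The continuation argument then gives $T_{\varepsilon,m}\gtrsim\varepsilon^{-(p-1)/(1-\beta)}$. Finally,
\[
1-\beta=\frac{2+\gamma-(p-1)(n-q)}{2},
\]
which yields exactly the exponent $\frac{2(p-1)}{2+\gamma-(p-1)(n-q)}$.

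The main obstacle is getting the sharp growth $(1+T)^{1-\beta}$ without an extra loss. One danger is a logarithm or worse on the $[t/2,t]$ piece when $\frac n4-\frac q2+\frac k2\ge1$; the other is the borderline sign of $\beta$. If a log appears, we would first try to remove it by the $\kappa$-trick, choosing a slightly different Lebesgue exponent that shifts a small power between the two factors.
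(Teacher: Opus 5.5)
Your overall route matches the paper's: a contraction in $X(T)$ with weights built from $\mathcal{R}=-q$, a growth factor $(1+T)^{1-\beta}$ on the Duhamel term, and a continuation argument. Your $1-\beta$ is exactly the paper's $\alpha(p,n,\gamma)$. The paper simply asserts \eqref{Crucial_4}--\eqref{Crucial_5} without derivation.

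\textbf{The gap is in how you get that factor on $[t/2,t]$.} You claim $a_k:=\frac n4-\frac q2+\frac k2\le 1$ for $n\le 4$. For $k=1$ this is false when $n=4,\ q<1$ or $n=3,\ q<\frac12$, and such parameters are admissible. For example, take $n=4$, $\gamma=0$, $q=0.9$: conditions \eqref{G-N_p_crit_Gamma_q} and \eqref{condition-Blowup} leave the nonempty range $1.55<p<1+\frac{2}{3.1}$, while $a_1=1.05$. In that regime your bound for the piece is $\int_{t/2}^t(1+t-\tau)^{-a_1}(1+\tau)^{-\beta}\,d\tau\sim(1+t)^{-\beta}$. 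So the weighted $\dot H^1$ component of $u^{\rm non}$ grows like $(1+T)^{a_1-\beta}$, and the contraction only yields $T_{\varepsilon,m}\gtrsim\varepsilon^{-(p-1)/(a_1-\beta)}$, which is strictly weaker than the claim.

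Your proposed remedies do not repair this. Absorbing a logarithm (the case $a_1=1$, e.g.\ $n=4$, $q=1$) loses sharpness. Passing to $\kappa<\eta$ goes the wrong way: it raises the kernel exponent $\frac n2(\frac1\kappa-\frac12)+\frac k2$ and makes $\|\mathcal I_\gamma(|u|^p)\|_{L^\kappa}$ decay slower by a factor $(1+\tau)^{\frac n2(\frac1\kappa-\frac1\eta)}$. You would only trade the log for a power loss.

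\textbf{The fix is to change the linear estimate near $\tau=t$, not the Lebesgue exponent.} On $[t/2,t]$, use Lemma \ref{lemma:1.3} with $m=2$, whose kernel factor is only $(1+t-\tau)^{-k/2}$. Combine it with the bound $\|\mathcal I_\gamma(|u|^p)(\tau,\cdot)\|_{L^2}\lesssim(1+\tau)^{-\frac{n-q}{2}p+\frac{n+2\gamma}{4}}\|u\|_{X(T)}^p$, already obtained in the proof of Theorem \ref{Thm_Global existence}. Then
\[
(1+t)^{a_k}\int_{t/2}^t(1+t-\tau)^{-\frac k2}(1+\tau)^{-\frac{n-q}{2}p+\frac{n+2\gamma}{4}}\,d\tau\lesssim(1+t)^{1-\frac{n-q}{2}p+\frac{n-q}{2}+\frac{\gamma}{2}}=(1+t)^{1-\beta}
\]
for $k=0,1$ and all $n\le 4$, with no logarithm. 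Keep your $L^\eta$ estimate on $[0,t/2]$, where $\beta<1$ gives $(1+t)^{-a_k+1-\beta}$, and treat $\mathcal N[u]-\mathcal N[\bar u]$ the same way. With that change the rest of your argument goes through.

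A smaller point: Proposition \ref{Decay_Linear_Positive order} requires $\mathcal R(u_0,u_1)<\infty$, which membership in $\mathcal A$ does not guarantee. It is safer to estimate the linear part directly from $|\widehat{u_j}(\xi)|\le|\xi|^{-q}\|u_j\|_{\mathcal Y^q}$ together with $q<n/2$.
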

\begin{proof}[\textbf{Proof of Proposition \ref{Thm_Lower_Bound}.}]
As presented in Remark \ref{Remark_Y^q}, we will use Proposition \ref{Decay_Linear_Positive order}  and the approach and notations in the proof of Theorem \ref{Thm_Global existence} with $\mathcal{R}(u_0, u_1) = -q$.
Specifically, for the subcritical case $1 < p < p_{\rm crit}(n,q,\gamma)$, we have the following estimates:
\begin{equation}\label{Crucial_4}
\|\mathcal{N}[u]\|_{X(T)}\leq C_0\varepsilon \|(u_0, u_1)\|_{\mathcal{A}}+C_0(1+T)^{\alpha(p,n,\gamma)}\|u\|_{X(T)}^p.
\end{equation}
and
\begin{align}\label{Crucial_5}
  \|\mathcal{N}[u]-\mathcal{N}[\bar{u}]\|_{X(T)} \leq C_1 (1+T)^{\alpha(p,n, \gamma)}\|u-\bar{u}\|_{X(T)} \left(\|u\|_{X(T)}^{p-1} + \|\bar{u}\|_{X(T)}^{p-1}\right)  
\end{align}
for all $u, \bar{u} \in X(T)$,
where $$\alpha(p,n,\gamma):=1-\frac{n-q}{2}p+\frac{n-q+\gamma}{2}$$
Next, we note that
$
    M^*:= 2C_0 \|(u_0, u_1)\|_{\mathcal{A}}.
$
Then, if we assume that
\begin{align*}
    \max\{C_0, 2C_1\}(1+T)^{\alpha(p,n,q)} \varepsilon^{p-1}< \frac{M^*}{4}, 
\end{align*}
then we may construct a unique local solution $u = \mathcal{N}[u] \in X(T, M^*\varepsilon)$ thanks to the relations \eqref{Crucial_4}, \eqref{Crucial_5}), and Banach's fixed point theorem. Moreover, it holds
$$
\|u\|_{X(T)} < \frac{3M^*\varepsilon}{4}.
$$
Afterwards, we consider
$$
T^*:=\sup\left\{ T\in[0,T_{\varepsilon,m}) \text{ such that } F(T):=\|u\|_{X(T)}\leq M^*\varepsilon \right\}.
$$
The function $F=F(T)$ is a continuous for any $T\in(0,T_{\varepsilon,m})$ so there exists a time $T_0\in(T^*,T_{\varepsilon,m})$ such that $F(T_0)\leq M^*\varepsilon$, which contradicts the definition of $T^*$. This implies that the following condition must hold:
$$
 \max\{C_0, 2C_1\}(1+T^*)^{\alpha(p,n,q)} \varepsilon^{p-1} \geq \frac{M^*}{4}, 
$$
that is,
$$
T_{\varepsilon,m} \geq T^* \gtrsim \varepsilon^{-\frac{2(p-1)}{2+\gamma-(p-1)(n-q)}}.
$$
This completes the proof of Proposition \ref{Thm_Lower_Bound}.
\end{proof}
\begin{remark}
\rm{
Since $u(t, x)$ constructed in Theorem \ref{Thm_Global existence} is a mild solution to \eqref{Semilinear_Damped_Waves}, a standard density argument (see, for instance, Proposition 3.1 in \cite{Ikeda2013}) ensures that this mild solution also qualifies as a weak solution to \eqref{Semilinear_Damped_Waves}. In particular, its maximal existence time satisfies $T_{\varepsilon,m}\leq T_{\varepsilon}$. By combining the upper bound established in Proposition \ref{upperbound} with the lower bound derived in Proposition \ref{Thm_Lower_Bound}, we obtain a new sharp estimate for $T_{\varepsilon}$ in the subcritical regime $1<p<p_{\mathrm{crit}}(n, q, \gamma)$, that is,
\begin{equation}\label{Sharp_lifespan_estimate}
T_{\varepsilon}\sim \varepsilon^{-\frac{2(p-1)}{2+\gamma-(p-1)(n-q)}} \quad \text{ for $0\leq \gamma<q<\frac{n}{2}$}.
\end{equation}
}
\end{remark}
\begin{remark} \label{remark_sharp}
{\rm
Estimate \eqref{Sharp_lifespan_estimate} is genuinely sharp. To the best of the authors' knowledge, no sharp lifespan estimate has been obtained under the assumption that the initial data belong to $L^m$ or $\dot{H}^{-\sigma}$ (see \cite{Ikeda2019,DuongDao2026}, where only non-sharp lifespan estimates are established).
}
\end{remark}

\section*{Acknowledgments}
Trung Loc Tang was funded by the PhD Scholarship Programme of Vingroup Innovation Foundation (VINIF), code VINIF.2025.TS01. Dinh Van Duong was supported by Vietnam Ministry of Education and Training and Vietnam Institute for Advanced Study in Mathematics under grant number B2026-CTT-04. Duc An Phan sincerely acknowledges the financial support provided by the Banking Academy of Vietnam.

\appendix

\section{Some tools from Harmonic Analysis}

\begin{proposition}[Fractional Gagliardo-Nirenberg inequality, \cite{Hajaiej2011}]\label{fractionalgagliardonirenbergineq} 
Let $1<p,\,p_0,\,p_1<\infty$, $a >0$ and $\theta\in [0,a)$. Then, it holds
$$ \|u\|_{\dot{H}^{\theta}_p} \lesssim \|u\|_{L^{p_0}}^{1-\omega(\theta,a)}\, \|u\|_{\dot{H}^{a}_{p_1}}^{\omega(\theta,a)}, $$
where $\omega(\theta,a) =\displaystyle\frac{\frac{1}{p_0}-\frac{1}{p}+\frac{\theta}{n}}{\frac{1}{p_0}-\frac{1}{p_1}+\frac{a}{n}}$ and $\displaystyle\frac{\theta}{a}\leq \omega(\theta,a) \leq 1$.
\end{proposition}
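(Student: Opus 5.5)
The plan is to deduce the inequality from two standard facts: a complex interpolation (Hölder-type) inequality between $L^{p_0}$ and $\dot{H}^{a}_{p_1}$, and the Hardy--Littlewood--Sobolev (Sobolev embedding) inequality for homogeneous potential spaces. Throughout, write $\omega=\omega(\theta,a)$ and define the intermediate exponent $p_\omega$ by
$$\frac{1}{p_\omega}:=\frac{1-\omega}{p_0}+\frac{\omega}{p_1}.$$
Since $p_0,p_1\in(1,\infty)$ and $\omega\in[0,1]$, we have $1<p_\omega<\infty$.

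\textbf{Step 1 (bookkeeping).} A direct rearrangement of the definition of $\omega$ gives
$$\omega a-\theta=n\Big(\frac{1}{p_\omega}-\frac{1}{p}\Big).$$
Indeed, $\omega\big(\tfrac1{p_0}-\tfrac1{p_1}+\tfrac an\big)=\tfrac1{p_0}-\tfrac1p+\tfrac\theta n$ is equivalent to $\tfrac{1-\omega}{p_0}+\tfrac{\omega}{p_1}-\tfrac1p=\tfrac{\omega a-\theta}{n}$. The hypothesis $\omega\ge\theta/a$ then says exactly that $\omega a-\theta\ge 0$, hence $p_\omega\le p$. Moreover $\omega a-\theta<n$, since $1/p_\omega<1$.

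\textbf{Step 2 (interpolation).} I will prove
$$\|u\|_{\dot H^{\omega a}_{p_\omega}}\lesssim\|u\|_{L^{p_0}}^{1-\omega}\|u\|_{\dot H^{a}_{p_1}}^{\omega}.$$
To avoid abstract interpolation of homogeneous spaces modulo polynomials, I would argue by duality together with the three-lines lemma, for $u$ Schwartz with $\widehat u$ vanishing near $0$ (then conclude by density). Fix $g$ in the dense class with $\|g\|_{L^{p_\omega'}}=1$, normalized so that $\|u\|_{L^{p_0}}=\|u\|_{\dot H^a_{p_1}}=1$ (use homogeneity and the scaling $u\mapsto\lambda u$). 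Set
$$\Phi(z):=\int_{\mathbb{R}^n}|D|^{az}u\,\cdot\, G_z\,dx,\qquad G_z:=|g|^{p_\omega'\left(\frac{1-z}{p_0'}+\frac{z}{p_1'}\right)}\operatorname{sgn} g .$$
On $\operatorname{Re}z=0$ and $\operatorname{Re}z=1$, the imaginary powers $|D|^{iy}$ are Mihlin multipliers, bounded on every $L^r$ with $1<r<\infty$, with norm $\lesssim(1+|y|)^{n/2+1}$. This polynomial growth is admissible in the three-lines lemma for functions of admissible growth, which gives $|\Phi(\omega)|\lesssim 1$ and hence the claim.

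\textbf{Step 3 (Sobolev embedding).} If $\omega a=\theta$, then $p_\omega=p$ and we are done. Otherwise $0<\omega a-\theta<n$ and $1<p_\omega<p<\infty$, with the exponent relation from Step 1. The Hardy--Littlewood--Sobolev inequality for $\mathcal I_{\omega a-\theta}=|D|^{-(\omega a-\theta)}$ therefore gives
$$\|u\|_{\dot H^{\theta}_{p}}=\big\||D|^{-(\omega a-\theta)}|D|^{\omega a}u\big\|_{L^p}\lesssim\|u\|_{\dot H^{\omega a}_{p_\omega}}.$$
Combining this with Step 2 yields the stated inequality.

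\textbf{Main obstacle.} The delicate step is Step 2. One needs the analytic family to be of admissible growth, which is where the polynomial bound on $|D|^{iy}$ is used. One also has to handle the homogeneous (quotient) nature of $\dot H^{s}_r$, which is done by working on the dense class with $\widehat u$ supported away from $0$. If this turns out to be troublesome, I would instead quote the identification $\dot H^s_r=\dot F^s_{r,2}$ and the complex interpolation formula $[\dot F^{0}_{p_0,2},\dot F^{a}_{p_1,2}]_\omega=\dot F^{\omega a}_{p_\omega,2}$ for homogeneous Triebel--Lizorkin spaces.
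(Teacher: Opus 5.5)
Your argument is correct and is the standard proof of this result: the interpolation bound $\|u\|_{\dot H^{\omega a}_{p_\omega}}\lesssim\|u\|_{L^{p_0}}^{1-\omega}\|u\|_{\dot H^{a}_{p_1}}^{\omega}$, followed by Hardy--Littlewood--Sobolev across the gap $\omega a-\theta=n\big(\frac{1}{p_\omega}-\frac{1}{p}\big)\in[0,n)$, is the usual interpolation-plus-Sobolev-embedding route to the sufficiency part in the cited work of Hajaiej et al.; the paper gives no proof of its own and only quotes that reference. The one slip is the normalization in Step 2: a scalar multiple cannot make both $\|u\|_{L^{p_0}}$ and $\|u\|_{\dot H^{a}_{p_1}}$ equal to $1$, so either add a dilation $u(\mu\,\cdot)$ (possible because the denominator of $\omega$ is nonzero), or replace $\Phi(z)$ by $\|u\|_{L^{p_0}}^{z-1}\|u\|_{\dot H^{a}_{p_1}}^{-z}\Phi(z)$, which leaves the three-lines argument otherwise unchanged.
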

\begin{proposition}[Hardy-Littlewood-Sobolev inequality, \cite{Lieb1983}]\label{Hardy-Littlewood-Sobolev}
Let $0<\gamma<n$ and $1<\eta_2<\eta_1<\infty$ such that $1/\eta_1=1/\eta_2-\gamma/n$. Then, there exists a constant $C$ depending only on $\eta_2$ such that
$$
\|\mathcal{I}_\gamma(f)\|_{L^{\eta_1}}\leq C\|f\|_{L^{\eta_2}}.
$$
\end{proposition}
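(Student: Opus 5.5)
The plan is to use Hedberg's pointwise argument, which reduces the Hardy--Littlewood--Sobolev inequality to the $L^{\eta_2}$-boundedness of the Hardy--Littlewood maximal function
$$
Mf(x)=\sup_{r>0}\frac{1}{|B(x,r)|}\int_{B(x,r)}|f(y)|\,dy .
$$
Since $|\mathcal{I}_\gamma f|\le \mathcal{I}_\gamma|f|$, we may assume $f\ge0$. By homogeneity we may also normalize $\|f\|_{L^{\eta_2}}=1$. The relation $1/\eta_1=1/\eta_2-\gamma/n>0$ gives $\gamma<n/\eta_2$, and this is the fact that makes the tail estimate below converge.

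\textbf{Step 1 (splitting the kernel).} Fix $x$ and a radius $r>0$ to be chosen later. Split the integral defining $\mathcal{I}_\gamma f(x)$ into the regions $|x-y|<r$ and $|x-y|\ge r$.

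For the near part, decompose into dyadic shells $2^{-k-1}r\le|x-y|<2^{-k}r$, $k\ge0$. On the $k$-th shell the kernel is at most $(2^{-k-1}r)^{-(n-\gamma)}$, and the integral of $f$ over the ball of radius $2^{-k}r$ is at most $C(2^{-k}r)^nMf(x)$. Summing the geometric series in $k$ (it converges because $\gamma>0$) gives a near part bounded by $C\,r^\gamma Mf(x)$.

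For the far part, apply H\"older's inequality with exponents $\eta_2,\eta_2'$. This gives the bound $\big\||x-\cdot|^{-(n-\gamma)}\mathbf 1_{|x-\cdot|\ge r}\big\|_{L^{\eta_2'}}$. The integral converges exactly when $(n-\gamma)\eta_2'>n$, which is equivalent to $\gamma<n/\eta_2$. The result is a far part bounded by $C\,r^{\gamma-n/\eta_2}=C\,r^{-n/\eta_1}$.

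\textbf{Step 2 (optimizing $r$).} Choose $r$ so that the two bounds balance, namely $r^{n/\eta_2}=Mf(x)^{-1}$. If $Mf(x)\in\{0,\infty\}$, handle it trivially: either $f=0$, or the point lies in a null set. With this choice,
$$
|\mathcal{I}_\gamma f(x)|\le C\,Mf(x)^{1-\gamma\eta_2/n}=C\,Mf(x)^{\eta_2/\eta_1}.
$$

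\textbf{Step 3 (conclusion).} Raise to the power $\eta_1$ and integrate:
$$
\|\mathcal{I}_\gamma f\|_{L^{\eta_1}}^{\eta_1}\le C\|Mf\|_{L^{\eta_2}}^{\eta_2}\le C\|f\|_{L^{\eta_2}}^{\eta_2}=C.
$$
The second inequality is the Hardy--Littlewood maximal theorem, which requires $\eta_2>1$. Undoing the normalization gives the claim. The constant depends on $n,\gamma,\eta_2$; once $n$ and $\gamma$ are fixed, it depends only on $\eta_2$, as stated.

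The step needing the most care is the exponent bookkeeping in Step 1. The tail integrability must match the hypothesis $\eta_1<\infty$, and the exponent must come out as $Mf^{\eta_2/\eta_1}$ so that the maximal theorem closes the argument. The rest is routine.
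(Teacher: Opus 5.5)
Your proof is correct. The dyadic near-field sum gives $C r^{\gamma} Mf(x)$, which is where $\gamma>0$ is used. The H\"older tail gives $C r^{-n/\eta_1}\|f\|_{L^{\eta_2}}$, which is where $\gamma<n/\eta_2$ (i.e.\ $\eta_1<\infty$) is used. Balancing the two yields $|\mathcal{I}_\gamma f(x)|\lesssim Mf(x)^{\eta_2/\eta_1}\|f\|_{L^{\eta_2}}^{1-\eta_2/\eta_1}$, and the maximal theorem, where $\eta_2>1$ enters, closes the argument. Your handling of the points with $Mf(x)\in\{0,\infty\}$ is also fine.

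The paper does not prove this proposition at all. It quotes it from Lieb (1983), whose concern is the sharp constant and the existence and identification of extremal functions, obtained through rearrangement and compactness arguments. The finiteness of some constant is taken there as the classical Hardy--Littlewood--Sobolev theorem.

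Your Hedberg-type argument is therefore a genuinely different, self-contained route, relying only on the Hardy--Littlewood maximal theorem. It produces an explicit but non-sharp constant depending on $n,\gamma,\eta_2$. That is exactly what the paper needs, since the inequality is used only as a boundedness statement in the proof of Theorem~\ref{Thm_Global existence}, with $(\eta_2,\eta_1)=(2n/(n+2\gamma),2)$ and $(\eta_2,\eta_1)=(\eta n/(n+\eta\gamma),\eta)$. What your route does not give is any information about the optimal constant, which is the actual content of the cited reference.
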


\section{Auxiliary lemmas}
\begin{lemma} \label{non-empty}
The set of initial data in Theorem \ref{Thm_Blow_up}, i.e. $(u_0,u_1)\in\mathcal{Y}^q\times \mathcal{Y}^q$ with the condition \eqref{assumption_initial_data}, is non-empty.
\end{lemma}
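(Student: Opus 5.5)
Our plan is to exhibit an explicit example: take $u_0=u_1=f$ with $f(x):=\langle x\rangle^{-n+q}$. Then $u_0+u_1=2\langle x\rangle^{-n+q}$, so \eqref{assumption_initial_data} holds trivially. It remains only to check that $f\in\mathcal{Y}^q$, i.e. that $\widehat f\in L^1_{\rm loc}$ and $\sup_\xi|\xi|^q|\widehat f(\xi)|<\infty$.

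\textbf{Step 1: Bessel-potential representation.} Since $0<q<n/2<n$, the function $\langle x\rangle^{-(n-q)}$ is, up to a positive constant, the Bessel-type kernel whose Fourier transform is known explicitly. Concretely, one has the classical formula
\[
\mathfrak{F}\big(\langle x\rangle^{-(n-q)}\big)(\xi)=c_{n,q}\,|\xi|^{-\frac{q}{2}}K_{\frac{q}{2}}(|\xi|),\qquad c_{n,q}>0,
\]
where $K_\nu$ is the modified Bessel function of the second kind. Alternatively, one can avoid special functions by writing, via the Gamma-function identity,
\[
\langle x\rangle^{-(n-q)}=\frac{1}{\Gamma(\frac{n-q}{2})}\int_0^\infty s^{\frac{n-q}{2}-1}e^{-s}e^{-s|x|^2}\,ds,
\]
and then computing the Fourier transform of each Gaussian. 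This gives
\[
\widehat f(\xi)=C\int_0^\infty s^{\frac{n-q}{2}-1-\frac n2}e^{-s}e^{-\frac{|\xi|^2}{4s}}\,ds .
\]
The integrand is positive, so $\widehat f>0$ and Fubini is justified.

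\textbf{Step 2: bounds on $\widehat f$.} The substitution $s=|\xi|^2\sigma$ yields
\[
\widehat f(\xi)=C|\xi|^{-q}\int_0^\infty \sigma^{-\frac q2-1}e^{-|\xi|^2\sigma}e^{-\frac1{4\sigma}}\,d\sigma\le C|\xi|^{-q}\int_0^\infty\sigma^{-\frac q2-1}e^{-\frac1{4\sigma}}\,d\sigma .
\]
The last integral is finite. Near $\sigma=0$ the factor $e^{-1/(4\sigma)}$ kills the singularity, and near $\sigma=\infty$ it converges because $q>0$. Hence $|\xi|^q|\widehat f(\xi)|\le C$ uniformly in $\xi$. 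Moreover $|\xi|^{-q}$ is locally integrable since $q<n$, so $\widehat f\in L^1_{\rm loc}$. In addition, $f\in S'$ because it is bounded.

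\textbf{Main obstacle.} The only delicate point is rigor in Step 1, namely justifying the Fourier transform of a non-$L^1$ function (we have $n-q>n/2$, so $f\in L^2$ but $f\notin L^1$). We handle this by noting that $f\in L^2$ and the subordination integral converges in $L^2$, so the formula holds as an $L^2$ identity. If desired, one could instead remark that the assumption only needs a lower bound, and choose $u_0=u_1=f$ anyway. The upper bound $|\xi|^q\widehat f\lesssim1$ is then the whole content, and by the computation above it even gives $|\xi|^q\widehat f(\xi)\to{\rm const}>0$ as $\xi\to0$, which is consistent with Remark \ref{Remark_Y^q}.
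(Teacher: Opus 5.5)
Your proof is correct and uses the same example as the paper, $u_0=u_1=C\langle x\rangle^{-n+q}$, but it bounds the Fourier transform by a different mechanism.

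The paper writes $\widehat{u_1}$ as the Hankel-type integral $\int_0^\infty\langle r\rangle^{-n+q}r^{n-1}\tilde J_{\frac n2-1}(r|\xi|)\,dr$ and splits it into $r|\xi|\le1$ and $r|\xi|>1$. In the second region it inserts the large-argument asymptotics of $J_{\frac n2-1}$. It then estimates a remainder $I_1$ and an oscillatory term $I_2$ separately for $|\xi|\le1$ and $|\xi|>1$, and the bound on $I_2$ rests on cancellation in $\int_1^\infty\omega^{\alpha}\cos(\omega-\theta)\,d\omega$.

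Your subordination identity instead gives $\widehat f(\xi)=C|\xi|^{-q}\int_0^\infty\sigma^{-q/2-1}e^{-|\xi|^2\sigma}e^{-1/(4\sigma)}\,d\sigma$, whose integrand is positive. No cancellation has to be tracked, and $|\xi|^q\widehat f(\xi)\le C\,4^{q/2}\Gamma(\frac q2)$ follows for all $\xi$ at once. In particular you avoid the large-$|\xi|$ range $1/|\xi|<r<1$. There $\langle r\rangle$ is not comparable to $r$, and the paper's reduction of $I_2$ (which replaces $\langle r\rangle$ by $r$) needs extra care.

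Your $L^2$ justification is sound. It is worth writing out the one-line reason: $\|s^{\frac{n-q}{2}-1}e^{-s}e^{-s|\cdot|^2}\|_{L^2}\sim s^{\frac n4-\frac q2-1}e^{-s}$, which is integrable at $s=0$ exactly because $q<n/2$.

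Your route also gives more than the lemma asks. It shows $\widehat f>0$ and $|\xi|^q\widehat f(\xi)\to c>0$ as $\xi\to0$. Since also $f\in H^1$ when $q<n/2$, the same datum has $r^*(f)=-q$, so it serves both Remark \ref{Remark_Y^q} and Theorem \ref{Thm_Blow_up}.

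In exchange, the paper's Hankel-transform argument does not depend on the special form $(1+|x|^2)^{-\beta}$ of the profile. It could in principle be applied to other radial data with the same decay, at the price of handling the Bessel asymptotics by hand.
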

\begin{proof}[\textbf{Proof of Lemma \ref{non-empty}.}]
To prove this lemma, we choose the initial data as follows:
$$
u_0(x)=u_1(x)=C\langle x\rangle^{-n+q},
$$
where $C > 0$.
It is clear that $u_0,u_1\in L^{\frac{n}{n-q}+\epsilon}(\mathbb{R}^n)$, where $\epsilon>0$ is chosen such that $1<n/(n-q)+\epsilon<2$. Next, we recall the formula for the modified Bessel function as follows:
$$
\mathfrak{F}(f)(\xi)=C\int_0^{\infty} g(r) r^{n-1} \tilde{J}_{\frac{n}{2}-1}(r|\xi|) d r \quad  \text{ with } g(|x|):=f(x) \in L^m, m \in [1, 2].
$$
Applying the above formula, we obtain
$$
\widehat{u_1}(\xi)=C \int_0^{\infty} \langle r\rangle^{-n+q}r^{n-1} \tilde{J}_{\frac{n}{2}-1}(r|\xi|) d r.
$$
Therefore, we observe that
$$
\|u_1\|_{\mathcal{Y}^q}=\sup_{\xi \in \mathbb{R}^n}\left\{|\xi|^q |\widehat{u_1}(\xi)|\right\} \sim \sup_{\xi \in \mathbb{R}^n}\left\{|\xi|^q \left|\int_0^{\infty} \langle r\rangle^{-n+q}r^{n-1} \tilde{J}_{\frac{n}{2}-1}(r|\xi|) d r\right|\right\}.
$$
It suffices to prove that $u_1 \in\mathcal{Y}^q$, the case of $u_0$ can be treated in exactly the same way as $u_1$. We also divide our interest two cases as follows:
\begin{itemize}[leftmargin=*]
        
\item If $r|\xi| \leq 1$, we see that
$$
|\xi|^q \left|\int_0^{\frac{1}{|\xi|}} \langle r\rangle^{-n+q}r^{n-1} \tilde{J}_{\frac{n}{2}-1}(r|\xi|) d r\right| \lesssim |\xi|^q \int_0^{\frac{1}{|\xi|}} \langle r\rangle^{-n+q}r^{n-1} d r.
$$
For $|\xi| \in (0, 1]$, we obtain
$$
|\xi|^q \int_0^{\frac{1}{|\xi|}} \langle r\rangle^{-n+q}r^{n-1} d r\lesssim |\xi|^q \int_0^1 r^{n-1} d r+|\xi|^q \int_1^{\frac{1}{|\xi|}} r^{q-1} d r \lesssim 1.
$$
For $|\xi| \in (1, \infty)$, one has
$$
|\xi|^q \int_0^\frac{1}{|\xi|} \langle r\rangle^{-n+q}r^{n-1} d r\lesssim \int_0^\frac{1}{|\xi|}r^{n-q-1} d r
\lesssim \int_0^1r^{n-q-1} d r \lesssim 1.
$$
\item If $r |\xi| >1$, using the first-order asymptotic expansion of the Bessel function $J_\nu(r|\xi|)$ with $\nu=n/2-1$ in the region $r|\xi|>1$, it yields
\begin{align*}
&|\xi|^q\int_\frac{1}{|\xi|}^\infty \langle r\rangle^{-n+q} r^{n-1}\tilde{J}_{\frac{n}{2}-1}(r|\xi|) d r=|\xi|^{q+1-\frac{n}{2}} \int_\frac{1}{|\xi|}^\infty \langle r\rangle^{-n+q} r^\frac{n}{2} J_{\frac{n}{2}-1}(r|\xi|) d r \\
&\quad=|\xi|^{q+1-\frac{n}{2}} \int_\frac{1}{|\xi|}^\infty \langle r\rangle^{-n+q} r^\frac{n}{2} o\left((r|\xi|)^{-\frac{3}{2}}\right) d r \\
&\qquad+C|\xi|^{q+1-\frac{n}{2}} \int_\frac{1}{|\xi|}^\infty \langle r\rangle^{-n+q} r^\frac{n}{2} (r|\xi|)^{-\frac{1}{2}} \cos\left(r|\xi|-\left(\frac{n}{4}-\frac{1}{2}\right)\pi-\frac{\pi}{4}\right) d r:= I_1(\xi)+ I_2(\xi).
\end{align*}
Now we focus on the first integral as follows.

\noindent For $|\xi| \in (0, 1]$, we get
\begin{align*}
|I_1(\xi)|=&|\xi|^{q+1-\frac{n}{2}} \int_\frac{1}{|\xi|}^\infty \langle r\rangle^{-n+q}r^\frac{n}{2} o\left((r|\xi|)^{-\frac{3}{2}}\right) d r\lesssim |\xi|^{q+1-\frac{n}{2}} \int_\frac{1}{|\xi|}^\infty r^{q-\frac{n}{2}} o\left((r|\xi|)^{-\frac{3}{2}}\right) d r \\
\lesssim & |\xi|^{q-\frac{n}{2}-\frac{1}{2}} \int_\frac{1}{|\xi|}^\infty r^{q-\frac{n}{2}-\frac{3}{2}} d r \lesssim 1.
\end{align*}
For $|\xi| \in (1, \infty)$, we arrive at
\begin{align*}
|I_1(\xi)|=&|\xi|^{q+1-\frac{n}{2}} \int_\frac{1}{|\xi|}^\infty \langle r\rangle^{-n+q}r^\frac{n}{2} o\left((r|\xi|)^{-\frac{3}{2}}\right) d r \\
=&|\xi|^{q+1-\frac{n}{2}} \left(\int_\frac{1}{|\xi|}^1 \langle r\rangle^{-n+q}r^\frac{n}{2} o\left((r|\xi|)^{-\frac{3}{2}}\right) d r+\int_1^\infty \langle r\rangle^{-n+q} r^\frac{n}{2} o\left((r|\xi|)^{-\frac{3}{2}}\right) d r\right) \\
\lesssim & |\xi|^{q+1-\frac{n}{2}} \int_\frac{1}{|\xi|}^1 r^\frac{n}{2} o\left((r|\xi|)^{-\frac{3}{2}}\right) d r +|\xi|^{q+1-\frac{n}{2}} \int_1^\infty r^{q-\frac{n}{2}}  o\left((r|\xi|)^{-\frac{3}{2}}\right) d r \\
\lesssim & |\xi| \int_\frac{1}{|\xi|}^1 r^\frac{n}{2} o\left((r|\xi|)^{-\frac{3}{2}}\right) d r +|\xi| \int_1^\infty  o\left((r|\xi|)^{-\frac{3}{2}}\right) d r \\
\lesssim & |\xi|^{-\frac{1}{2}} \int_\frac{1}{|\xi|}^1  r^\frac{n-3}{2} d r +|\xi|^{-\frac{1}{2}} \int_1^\infty r^{-\frac{3}{2}} d r\lesssim 1.
\end{align*}
Next, for the integral $I_2(\xi)$, we see that
$$
I_2(\xi)=|\xi|^{q+\frac{1}{2}-\frac{n}{2}} \int_{\frac{1}{|\xi|}}^{\infty}\langle r\rangle^{-n+q}r^{\frac{n-1}{2}} \cos \left(r|\xi|-\left(\frac{n}{4}-\frac{1}{2}\right) \pi-\frac{\pi}{4}\right) d r.
$$
Applying the change of variables $\omega :=r|\xi|$, we have
\begin{align*}
|I_2(\xi)|=&|\xi|^{q+\frac{1}{2}-\frac{n}{2}} \left|\int_1^{\infty}\left\langle\frac{\omega}{|\xi|}\right\rangle^{-n+q}\left(\frac{\omega}{|\xi|}\right)^{\frac{n-1}{2}} \cos (\omega-\theta) \frac{d \omega}{|\xi|}\right| \\
= & |\xi|^{-n+q} \left|\int_1^{\infty}\omega^{q-\frac{n+1}{2}}|\xi|^{n-q}\cos (\omega-\theta) d \omega\right|=\left|\int_1^{\infty}\omega^{\alpha}\cos (\omega-\theta) d \omega\right| \lesssim 1, 
\end{align*}
where $\theta:=\left(\dfrac{n}{4}-\dfrac{1}{2}\right) \pi+\dfrac{\pi}{4}$ and $\alpha:=q-\dfrac{n+1}{2}<-\dfrac{1}{2}$.
\end{itemize}
Therefore, our proof is finished.
\end{proof}

\begin{lemma}[Lemma A.1, \cite{Dao2019}] \label{prop_Integral inequality}
Let $\alpha, \beta \in \mathbb{R}$. Then, the following inequality holds:
$$
\int_{0}^{t}(1+t-\tau)^{-\alpha}(1+\tau)^{-\beta} \; d \tau \lesssim \begin{cases}(1+t)^{-\min \{\alpha, \beta\}} & \text { if } \max \{\alpha, \beta\}>1, \\ (1+t)^{-\min \{\alpha, \beta\}} \log (e+t) & \text { if } \max \{\alpha, \beta\}=1, \\ (1+t)^{1-\alpha-\beta} & \text { if } \max \{\alpha, \beta\}<1.
\end{cases}
$$
\end{lemma}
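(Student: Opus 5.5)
The approach is to split the integral at the midpoint $\tau=t/2$, which decouples the two factors. We may assume $t\geq 1$, since for $t\in[0,1]$ the integrand is bounded by a constant depending only on $\alpha,\beta$, and so is every right-hand side. Write
$$
\int_0^t(1+t-\tau)^{-\alpha}(1+\tau)^{-\beta}\,d\tau=\int_0^{t/2}\cdots\,d\tau+\int_{t/2}^{t}\cdots\,d\tau=:A(t)+B(t).
$$

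\textbf{Estimating the two halves.} On $[0,t/2]$ we have $1+t-\tau\sim 1+t$, with constants independent of $t$, and this holds for either sign of $\alpha$. Hence $A(t)\sim(1+t)^{-\alpha}\int_0^{t/2}(1+\tau)^{-\beta}d\tau$. Computing the elementary integral, and using $1+t/2\sim 1+t$, gives
$$
A(t)\lesssim (1+t)^{-\alpha}\times
\begin{cases}
1 & \text{if } \beta>1,\\
\log(e+t) & \text{if } \beta=1,\\
(1+t)^{1-\beta} & \text{if } \beta<1.
\end{cases}
$$
The last line is valid for every $\beta<1$, including negative $\beta$. On $[t/2,t]$ we have $1+\tau\sim 1+t$. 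After the substitution $s=t-\tau$, $B(t)\sim(1+t)^{-\beta}\int_0^{t/2}(1+s)^{-\alpha}ds$, which obeys the same trichotomy with the roles of $\alpha$ and $\beta$ exchanged.

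\textbf{Combining the cases.} The cases are handled as follows. By symmetry we may assume $\beta\le\alpha$ wherever convenient.
\begin{itemize}[leftmargin=*]
\item If both $\alpha,\beta>1$, then $A\lesssim(1+t)^{-\alpha}$ and $B\lesssim(1+t)^{-\beta}$. The sum is $\lesssim(1+t)^{-\min\{\alpha,\beta\}}$.
\item If $\alpha>1\geq\beta$, then $B\lesssim(1+t)^{-\beta}$. For $A$: when $\beta<1$, $A\lesssim(1+t)^{1-\alpha-\beta}\le(1+t)^{-\beta}$ because $1-\alpha<0$; when $\beta=1$, $A\lesssim(1+t)^{-\alpha}\log(e+t)\lesssim(1+t)^{-1}$ because $\alpha>1$. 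In both subcases the total is $\lesssim(1+t)^{-\min\{\alpha,\beta\}}$.
\item If $\max\{\alpha,\beta\}=1$, say $\alpha=1$, then $B\lesssim(1+t)^{-\beta}\log(e+t)$. Also $A\lesssim(1+t)^{-\beta}$ if $\beta<1$, and $A\lesssim(1+t)^{-1}\log(e+t)$ if $\beta=1$. Either way the total is $(1+t)^{-\min\{\alpha,\beta\}}\log(e+t)$.
\item If $\max\{\alpha,\beta\}<1$, both halves are $\lesssim(1+t)^{1-\alpha-\beta}$.
\end{itemize}

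The argument is elementary. The only point needing care is the bookkeeping in the mixed cases, where one must verify that the term coming from the factor with exponent $\le1$ is dominated by $(1+t)^{-\min\{\alpha,\beta\}}$. This uses exactly that the other exponent is $>1$ (to beat a logarithm or the growth $(1+t)^{1-\beta}$), and I expect it to be the only step requiring attention.
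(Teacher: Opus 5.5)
The paper gives no proof of this lemma; it is quoted from \cite{Dao2019}. Your argument is the standard one and is correct in every case, including negative exponents: split at $\tau=t/2$, freeze the non-singular factor on each half, use the symmetry $\tau\mapsto t-\tau$, and then do the mixed-case bookkeeping.

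One wording fix concerns the reduction to $t\ge1$. What you need there is that each right-hand side is bounded \emph{below} by a positive constant on $[0,1]$, not above. This holds, since $\log(e+t)\ge1$ and $(1+t)^{c}\ge 2^{-|c|}$. The reduction is in fact unnecessary, because your bounds for $\int_0^{t/2}(1+\tau)^{-\beta}\,d\tau$ already hold for all $t\ge0$.
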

\begin{lemma}[Propositions 4.1 and 4.2, \cite{DAbbicco2017}] \label{lemma:1.3}
Let $n \geq 1, 1 \leq m \leq 2$ and $s \geq 0$. For $j=0,1$, assume that $v_0=0$ and $v_1\in H^{[s+j-1]^+} \cap L^m$ in \eqref{Linear_Damped_Waves}, then the solution to \eqref{Linear_Damped_Waves} satisfies the following estimate:
$$
\big\|\partial_t^j v(t, \cdot)\big\|_{\dot{H}^s} \lesssim (1+t-\tau)^{-\frac{n}{2}\left(\frac{1}{m}-\frac{1}{2}\right)-\frac{s}{2}-j} \|v_1\|_{H^{[s+j-1]^+} \cap L^m}. $$
\end{lemma}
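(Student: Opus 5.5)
The statement to prove is the last one, Lemma \ref{lemma:1.3}: linear estimates for $v_0=0$ with data $v_1\in L^m\cap H^{[s+j-1]^+}$. The solution is $\partial_t^j v(t)=\partial_t^j\mathcal K(t)\ast v_1$, so I work on the Fourier side and split frequencies with a smooth cutoff $\chi_{\rm low}(\xi)$ supported in $|\xi|\le 1/2-\delta$, plus its complement. I estimate the two pieces separately and add.

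\emph{Low frequencies.} For $|\xi|\le 1/2-\delta$ write $\sqrt{1/4-|\xi|^2}=1/2-|\xi|^2+O(|\xi|^4)$. Then
$$|\partial_t^j\widehat{\mathcal K}(t,\xi)|\lesssim |\xi|^{2j}e^{-c|\xi|^2t}+e^{-ct}.$$
The $|\xi|^{2j}$ factor holds because $\partial_t$ of the slowly decaying mode $e^{(-1/2+\sqrt{1/4-|\xi|^2})t}$ produces the factor $-1/2+\sqrt{\cdot}\sim-|\xi|^2$, while the other mode decays exponentially. By Plancherel,
$$\big\||\xi|^{s}\chi_{\rm low}\partial_t^j\widehat{\mathcal K}\,\widehat{v_1}\big\|_{L^2}\le \big\||\xi|^{s+2j}e^{-c|\xi|^2t}\chi_{\rm low}\big\|_{L^{r}}\|\widehat{v_1}\|_{L^{m'}},$$
where $1/r=1/2-1/m'=1/m-1/2$. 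Hausdorff--Young gives $\|\widehat{v_1}\|_{L^{m'}}\lesssim\|v_1\|_{L^m}$ for $1\le m\le 2$. Scaling $\xi\mapsto\xi/\sqrt t$ gives
$$\big\||\xi|^{s+2j}e^{-c|\xi|^2t}\big\|_{L^r}\lesssim (1+t)^{-\frac{s}{2}-j-\frac{n}{2r}},$$
and $\frac{n}{2r}=\frac n2(\frac1m-\frac12)$. For $t\le1$ the low-frequency multiplier is simply bounded, which handles $(1+t)$ rather than $t$. The $e^{-ct}$ part is harmless.

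\emph{High frequencies.} For $|\xi|\ge 1/2-\delta$ the symbol is $e^{-t/2}$ times oscillatory factors. In particular $|\widehat{\mathcal K}|\lesssim e^{-ct}\langle\xi\rangle^{-1}$ and $|\partial_t\widehat{\mathcal K}|\lesssim e^{-ct}$, using that near $|\xi|=1/2$ the quotient $\sinh(t\mu)/\mu$ stays bounded by $te^{t\mu}$ with $\mu$ small. Hence $|\xi|^s|\partial_t^j\widehat{\mathcal K}|\lesssim e^{-ct}\langle\xi\rangle^{s+j-1}$, and the high part is bounded by $e^{-ct}\|v_1\|_{H^{[s+j-1]^+}}$. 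That is, one derivative is gained in the $v_1$ position, which explains the index $[s+j-1]^+$. Exponential decay beats any polynomial rate.

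\emph{Main obstacle.} The delicate step is the uniform symbol bound near the transition sphere $|\xi|=1/2$ and the $|\xi|^{2j}$ gain at low frequency for $j=1$. For the first, I would use the unified formula $\widehat{\mathcal K}=e^{-t/2}\,\frac{\sinh(t\mu)}{\mu}$ with $\mu=\sqrt{1/4-|\xi|^2}\in\mathbb C$, which is entire in $\mu^2$. On the intermediate annulus $1/2-\delta\le|\xi|\le 1/2+\delta$ one has $|\sinh(t\mu)/\mu|\le t\,e^{t\,\mathrm{Re}\,\mu}$ with $\mathrm{Re}\,\mu\le\sqrt{\delta}$ small, so this part still decays like $e^{-t/4}$. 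Combining both regimes gives the claimed rate $(1+t)^{-\frac n2(\frac1m-\frac12)-\frac s2-j}$. The statement's $(1+t-\tau)$ is just this bound applied with initial time $\tau$, as used in the Duhamel term.
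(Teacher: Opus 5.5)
Your proof is correct. The paper gives no proof of this lemma; it only quotes Propositions 4.1--4.2 of \cite{DAbbicco2017}, and your argument is the standard one behind those results. You split into low and high frequencies, apply Hausdorff--Young and H\"older against $\||\xi|^{s+2j}e^{-c|\xi|^2t}\|_{L^r}$ with $1/r=1/m-1/2$ at low frequencies, and use exponential decay with the one-derivative gain (hence the index $[s+j-1]^+$) at high frequencies, reading $(1+t-\tau)$ as the same bound with initial time $\tau$.
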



\end{document}